\newtheorem{tm}{Theorem}
\newtheorem{theo}[tm]{Theorem}
\newtheorem{lm}[tm]{Lemma}
\newtheorem{prop}[tm]{Proposition}
\theoremstyle{definition}
\newtheorem{conj}[tm]{Conjecture}
\newtheorem{ex}[tm]{Example}
\newcommand{\al}{\alpha}
\newcommand{\be}{\beta}
\newcommand{\ga}{\gamma}
\newcommand{\la}{\lambda}
\newcommand{\Si}{\Sigma}
\newcommand{\si}{\sigma}
\newcommand{\bC}{\mathbb C}
\newcommand{\bN}{\mathbb N}
\newcommand{\bZ}{\mathbb Z}
\newcommand{\C}{\mathcal C}
\newcommand {\A} {\mathcal A}
\newcommand \RR{\mathcal {R}}
\begin{document}
\title{Vandermonde varieties and relations among Schur polynomials} 

\author[R.~Fr\"oberg]{Ralf   Fr\"oberg}
\noindent 
\address{Department of Mathematics, Stockholm University, SE-106 91, Stockholm,  Sweden}
\email{ralff@math.su.se}

\author[B.~Shapiro]{Boris Shapiro}
\noindent 
\address{Department of Mathematics, Stockholm University, SE-106 91, Stockholm,  Sweden}
\email{shapiro@math.su.se}

\begin{abstract} Motivated by the famous Skolem-Mahler-Lech theorem we initiate in this paper  the study  of a natural class of determinantal  varieties  which we call {\em Vandermonde varieties}. They are closely related to the varieties consisting of  all linear recurrence relations of a given order possessing a non-trivial solution vanishing at a given set of integers.  In the regular case, i.e., when the dimension of a Vandermonde variety is the expected one,  we present its  free resolution,   obtain its degree and the Hilbert series. 
Some interesting relations among Schur polynomials are derived. Many open problems and conjectures are posed.
 \end{abstract}

\subjclass[2010] {Primary 65Q10, Secondary 65Q30, 14M15}

\keywords{}

\date{}
\maketitle 

\section{Introduction} 

The results in the present paper come from an attempt to understand the famous Skolem-Mahler-Lech theorem and its consequences.   Let us briefly recall its formulation. A linear recurrence relation with constant coefficients  of order $k$ is an equation   of the form
\begin{equation}\label{eq:Basic}
     u_{n}+\al_{1}u_{n-1}+\al_{2}u_{n-2}+\cdots+\al_{k}u_{n-k}=0,\; n\ge k
\end{equation}
where the  coefficients $(\al_1,...,\al_k)$ are fixed complex numbers and $\al_k\neq 0$. (Equation \eqref{eq:Basic} is  often referred to as a linear homogeneous difference equation with constant coefficients.) 

    The left-hand side of the equation
\begin{equation}\label{eq:Char}
	t^k+\al_{1}t^{k-1}+\al_{2}t^{k-2}+\cdots +\al_{k}=0
\end{equation}
is called the {\em characteristic polynomial} of
recurrence~\eqref{eq:Basic}. Denote the roots of~\eqref{eq:Char} (listed with possible repetitions) by
$x_{1},\ldots, x_{k}$ and call them the {\em characteristic roots} of \eqref{eq:Basic}. 

Notice that all $x_i$ are non-vanishing since $\al_k\neq 0$. 
 To obtain a concrete solution of \eqref{eq:Basic} one has  to prescribe additionally an initial $k$-tuple, $(u_0,\dots, u_{k-1})$, which can be chosen arbitrarily. Then $u_n,\;n\ge k$ are  obtained by using  the relation \eqref{eq:Basic}.  A solution of  \eqref{eq:Basic} is called {\em non-trivial} if not all of its entries vanish. In case of all distinct characteristic roots a general solution of \eqref{eq:Basic} can be given by 
$$u_n=c_1x_1^n+c_2x_2^n+...+c_kx_k^n$$ where $c_1,...,c_k$ are arbitrary complex numbers. In the general case of multiple characteristic roots a similar formula can be found in e.g. \cite {St}. 

An  arbitrary solution of a linear homogeneous difference (or differential) equation with constant coefficients of order $k$  is called an {\em exponential polynomial of order $k$}. One usually substitutes $x_i\neq 0$ by  $e^{\ga_i}$ and considers the obtained function in $\bC$ instead of $\bZ$ or $\bN$. (Other terms used for exponential polynomials are  {\em quasipolynomials} or {\em exponential sums}.) 

The most fundamental fact about  the structure of integer zeros of exponential polynomials is the well-known Skolem-Mahler-Lech theorem formulated below. It was first proved for recurrence sequences
of algebraic numbers  by K.~Mahler~\cite{Ma} in the 30's, based upon an idea of T.~Skolem~\cite{Sk}. Then, C.~Lech~\cite{Le}  published the result for general recurrence
sequences in 1953. In 1956 Mahler published the same result, apparently independently (but later realized to his chagrin that he had actually reviewed Lech's paper some years earlier, but had forgotten it).

\begin{theo} [The Skolem-Mahler-Lech theorem]  If $a_0,a _1, . . .$ is a solution to a linear recurrence relation, then the set of all k such that
$a_k = 0$ is the union of a finite (possibly empty) set and a finite number (possibly zero) of full arithmetic progressions.
(Here, a full arithmetic progression means a set of the form ${r, r + d, r + 2d, . . . }$  with $0 < r < d.$) 
\end{theo}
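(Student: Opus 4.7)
The plan is to apply Skolem's $p$-adic method. The first move is a reduction from $\bC$ to an algebraic setting: the general solution (assuming distinct roots) has the form $a_n = \sum_{i=1}^{k} c_i x_i^n$ and involves only finitely many complex constants, so these generate a finitely generated subring of $\bC$ and a standard specialization argument lets one assume $c_i, x_i$ all lie in a number field $K$. The case of characteristic roots of higher multiplicity is handled analogously by replacing the $c_i$ with polynomials in $n$, so I focus on the distinct-root case.

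Next, pick a finite prime $\mathfrak{p}$ of the ring of integers of $K$ at which every $x_i$ is a unit; all but finitely many primes work since the $x_i$ are nonzero. Let $q$ be the cardinality of the residue field at $\mathfrak{p}$ and set $N := q-1$, so that $x_i^N = 1 + \pi y_i$ for a uniformizer $\pi$ and some $\mathfrak{p}$-adic integers $y_i$. Stratify $\bZ_{\ge 0}$ by residue class modulo $N$: for fixed $r \in \{0,\dots,N-1\}$ and non-negative integer $t$, write $n = r + Nt$, so
\[
a_{r+Nt} \;=\; \sum_{i=1}^{k} c_i\, x_i^r\,(1+\pi y_i)^t \;=:\; f_r(t).
\]
Using the $p$-adic binomial expansion $(1+\pi y_i)^t = \sum_{m\ge0}\binom{t}{m}(\pi y_i)^m$, which converges for every $t \in \bZ_p$, each $f_r$ extends uniquely to a $\mathfrak{p}$-adic analytic function on the disc $\bZ_p$.

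The conclusion now follows from Strassmann's theorem, which says that a nonzero power series over $K_\mathfrak{p}$ whose coefficients tend to zero has only finitely many zeros on the closed unit disc. Applied to each $f_r$, this yields a dichotomy: either $f_r \equiv 0$, and the entire arithmetic progression $\{r, r+N, r+2N, \ldots\}$ lies in the zero set of $(a_n)$, or $f_r$ has only finitely many zeros in $\bZ_p$ and hence contributes only a finite subset. Taking the union over the $N$ residue classes yields the decomposition claimed in the theorem.

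The main obstacle is not the analytic step but the arithmetic input that makes it available: producing a single prime $\mathfrak{p}$ at which all $x_i$ are units, \emph{and} a single exponent $N$ with $x_i^N \equiv 1 \pmod{\mathfrak{p}}$ for every $i$ simultaneously, is precisely what forces the reduction from $\bC$ to an algebraic setting. A secondary difficulty is the repeated-root bookkeeping: the polynomial prefactors $n^s$, once $n$ is replaced by $r+Nt$, must be absorbed into the binomial-series expansion so that the resulting $f_r$ remain Strassmann-admissible. It is also worth flagging that the resulting proof is inherently non-effective — Strassmann's theorem produces no bound on the size of the exceptional finite part — which may constrain any later attempt to make the theorem quantitative.
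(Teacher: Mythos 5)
The paper offers no proof of this statement: it is quoted as a classical result and attributed to Skolem, Mahler and Lech, so there is nothing internal to compare your argument against. On its own terms, your sketch is the standard Skolem--Mahler--Lech argument ($p$-adic interpolation along residue classes plus Strassmann's theorem), and the analytic core --- choosing $\mathfrak{p}$ with all $x_i$ units, setting $N=q-1$, interpolating $t\mapsto a_{r+Nt}$ by a $\mathfrak{p}$-adic analytic function, and applying Strassmann to get ``identically zero or finitely many zeros'' on each class --- is correct in outline.

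The one step that, as stated, would fail is the reduction from $\bC$ to a number field. A specialization $\phi$ of the finitely generated ring $\bZ[c_i,x_i]$ only gives $a_n=0\Rightarrow\phi(a_n)=0$, so the zero set of the original sequence is merely a \emph{subset} of the zero set of the specialized one; a subset of a finite union of progressions and a finite set need not again have that form, so knowing the theorem for the specialization proves nothing about the original sequence. The correct device (this is precisely Lech's contribution beyond Skolem and Mahler) is an embedding theorem: the finitely generated field $\bQ(c_1,\dots,c_k,x_1,\dots,x_k)$ embeds into $\bQ_p$ for infinitely many primes $p$, and one can choose the embedding so that all $x_i$ become $p$-adic units; this preserves the zero set exactly and then your argument goes through. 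Two smaller points worth tightening: for $p=2$ (or at a ramified prime) one must replace $N=q-1$ by a suitable multiple so that $x_i^N\equiv 1$ modulo a high enough power of $\mathfrak{p}$ for the interpolating series to be genuinely analytic (equivalently, for $\log(1+\pi y_i)$ to converge where needed); and the repeated-root case does require checking that the polynomial prefactors in $t$ keep the coefficients of $f_r$ tending to zero, though this is routine. None of this changes the architecture of your proof, which is the right one.
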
 

A simple criterion guaranteeing the absence of arithmetic progressions is that no quotient of  two distinct characteristic roots  of the recurrence relation under consideration is a root of unity, see e.g. \cite{LoPo}. A recurrence relation \eqref{eq:Basic} satisfying this condition is called {\em  non-degenerate}. Substantial literature is devoted to finding the upper/lower bounds for the maximal number of arithmetic progressions/exceptional roots among all/non-degenerate linear recurrences of a given order. We give more details in \S~3. Our study is directly inspired by these investigations. 

\medskip
Let $L_k$ be the space of all linear recurrence relations \eqref{eq:Basic} of  order at most  $k$  with constant coefficients   and denote by $L_k^*=L_k\setminus \{\al_k=0\}$ the subset of all linear recurrence of order exactly $k$.  ($L_k$ is   the affine space with coordinates $(\al_1,...,\al_k)$.) To an arbitrary pair 
$(k;I)$ where $k\ge 2$ is a positive integer and $I=\{i_0<i_1<i_2<...<i_{m-1}\},\, m\ge k$ is a sequence of integers, we associate the variety $V_{k;I}\subset L_k^* $, the set of all linear recurrences of order  exactly $k$ having a non-trivial solution vanishing at all points of $I$. Denote by $\overline {V_{k;I}}$ the (set-theoretic) closure of $V_{k;I}$ in $L_k$. 
We call $V_{k;I}$ (resp. $\overline V_{k;I}$)  the open (resp. closed) {\em  linear recurrence variety} associated to  the pair $(k;I)$.  

In what follows we will always assume that $\gcd(i_1-i_0,...,i_{m-1}-i_0)=1$ to avoid unnecessary freedom related to the time rescaling in \eqref{eq:Basic}. 
Notice that since for  $m\le k-1$ one has $V_{k;I}=L_k^*$ and $\overline V_{k;I}= L_k$, this case does not require special consideration. A more important observation is  that due to translation invariance of \eqref{eq:Basic} for any integer $l$ and any pair $(k;I)$ the variety $V_{k;I}$ (resp. $\overline V_{k;I}$) coincides with the variety $V_{k;I+l}$ (resp. $\overline V_{k;I+l}$)  where the set of integers $I+l$ is obtained by adding $l$ to all entries of $I$.
 
So far we defined $\overline  V_{k;I}$ and $V_{k;I}$ as sets. However  for any pair $(k;I)$ the set $\overline V_{k;I}$  is an affine algebraic variety, see Proposition~\ref{pr:alg}. 
Notice that this fact is not completely obvious since if we, for example, instead of a set of integers choose as $I$ an arbitrary subset of real or complex numbers then the similar subset of $L_n$ will, in general,  only be  analytic.  

\medskip
Now we define the Vandermonde variety associated with a given pair $(k;I),\;  I=\{0\le i_0<i_1<i_2<...<i_{m-1}\},\; m\ge k$. Firstly, consider  the set $M_{k;I}$ of (generalized)  Vandermonde matrices  of the form 
\begin{equation}\label{eq:VdM}
M_{k;I}=\begin{pmatrix}  x_1^{i_0}&x_2^{i_0}&\cdots &x_k^{i_0}\\x_1^{i_1}&x_2^{i_1}&\cdots&x_k^{i_1}\\ \cdots&\cdots&\cdots&\cdots\\x_1^{i_{m-1}}&x_2^{i_{m-1}}&\cdots&x_k^{i_{m-1}}  \end{pmatrix},
\end{equation}
where $(x_1,...,x_k)\in \bC^k$.  In other words,   for a given  pair $(k;I)$ we take the map $M_{k;I}: \bC^k\to Mat(m,k)$ given by  \eqref{eq:VdM}ÔøΩ
where $Mat(m,k)$ is the space of all $m\times k$-matrices with complex entries and $(x_1,...,x_k)$ are chosen coordinates in $\bC^k$. 

We now define three slightly different but closely related versions of this variety as follows. 

\medskip\noindent
{\em Version 1.} Given a pair $(k;I)$ with $|I|\ge k$ define the {\em coarse Vandermonde variety} $Vd_{k;I}^{\bf c}\subset M_{k;I}$ as the set 
of all degenerate Vandermonde matrices, i.e., whose rank is smaller than $k$. $Vd_{k;I}^{\bf c}$  is obviously an algebraic variety whose defining ideal $\mathcal I_I$ is generated by all  $\binom {m}{k}$ maximal minors of $M_{k;I}$. Denote the quotient ring by $\RR_I=\RR/\mathcal I_I$. 

\medskip
Denote by $\A_k\subset \bC^k$ the standard Coxeter arrangement (of the Coxeter  group $A_{k-1}$)  consisting of all diagonals $x_i=x_j$ and by $\mathcal {BC}_k\subset \bC^k$ the Coxeter arrangement consisting of all $x_i=x_j$ and $x_i=0$. Obviously, $\mathcal BC_k\supset \A_k$. Notice that $Vd_{k;I}^{\bf c}$ always includes the arrangement $\mathcal {BC}_k$ (some of the hyperplanes with multiplicities) which is often inconvenient. Namely, 
with very few exceptions this means that $Vd_{k;I}^{\bf c}$ is not equidimensional, not CM, not reduced etc. For applications to  linear recurrences as well as questions in combinatorics and geometry of Schur polynomials it seems more natural to consider the localizations of $Vd_{k;I}^{\bf c}$ in $\bC^k\setminus \A_k$ and in $\bC^k\setminus \mathcal{BC}_k$. 

  \medskip
\noindent
{\em  Version 2.}
Define the $\A_k$-localization $Vd_{k;I}^{\A}$ of  $Vd_{k;I}^{\bf c}$ as the contraction of $Vd_{k;I}^{\bf c}$ to $\bC^k\setminus \A_k$.   Its is easy to obtain the  generating ideal of $Vd_{k;I}^\A$. Namely,  recall that given a sequence   ${ J}=(j_1<j_2<\cdots<j_k)$ of nonnegative integers one defines the associated Schur polynomial $S_{ J}(x_1,...,x_k)$ as given by 
 $$S_{ J}(x_1,\ldots,x_k)=\begin{vmatrix} x_1^{j_1}&x_2^{j_1}&\cdots&x_k^{j_2}\\ x_1^{j_2}&x_2^{j_2}&\cdots&x_k^{j_2}\\ \cdots&\cdots&\cdots&\cdots\\x_1^{j_k}&x_2^{j_k}&\cdots&x_k^{j_k}
 
 \end{vmatrix}/W(x_1,\ldots,x_k),
 $$ where $W(x_1,\ldots,x_k)$ is the usual Vandermonde determinant.  
 Given a sequence $I=(0\le i_0<i_1<i_2<\cdots<i_{m-1})$ with $\gcd(i_1-i_0,\ldots,i_{m-1}-i_0)=1$ consider the set of all  its $\binom m k$ subsequences ${ J}_\kappa$ of length $k$.   Here the index  $\kappa$ runs over the set of all subsequences of length $k$ among $\{1,2,..., m\}$. Take the corresponding 
 Schur polynomials $S_{ J_\kappa}(x_1,\ldots,x_k)$ and form the ideal $\mathcal I_{I}^\A$ in the polynomial ring $\bC[x_1,\ldots,x_k]$   generated by  all $\binom m k$ such Schur polynomials $S_{ J_\kappa}(x_1,\ldots,x_k)$. One can  show that the  Vandermonde variety $Vd_{k;I}^\A\subset \bC^k$ is generated by  $\mathcal I_{I}^\A,$ see Lemma~\ref{lm:deter}.   Denote the quotient ring by $\RR_I^\A=\RR/\mathcal I_I^\A$ where $\RR=\bC[x_1,...,x_k]$. Analogously, to the coarse Vandermonde variety $Vd_{k;I}$ the variety $Vd_{k;I}^\A$  often contains irrelevant coordinate hyperplanes which prevents it from having nice algebraic properties. For example, if $i_0>0$ then all coordinate hyperplanes necessarily belong to $Vd_{k;I}^\A$ ruining equidimensionality etc. On the other hand, under the assumption that $i_0=0$ the variety $Vd_{k;I}^\A$  often has quite reasonable properties presented below.
 
   \medskip
\noindent
{\em Version 3.}
Define the $\mathcal{BC}_k$-localization $Vd_{k;I}^{{BC}}$ of  $Vd_{k;I}^{\bf c}$ as the contraction of $Vd_{k;I}^{\bf c}$ to $\bC^k\setminus \mathcal {BC}_k$.   Again it is straightforward  to find the generating ideal of $Vd_{k;I}^{BC}$. Namely, 
  given a sequence   ${ J}=(0\le j_1<j_2<\cdots<j_k)$ of nonnegative integers define the reduced Schur polynomial $\hat S_{ J}(x_1,...,x_k)$ as given by 
 $$\hat S_{ J}(x_1,\ldots,x_k)=\begin{vmatrix} 1&1&\cdots&1\\ x_1^{j_2-j_1}&x_2^{j_2-j_1}&\cdots&x_k^{j_2-j_1}\\ \cdots&\cdots&\cdots&\cdots\\x_1^{j_k-j_1}&x_2^{j_k-j_1}&\cdots&x_k^{j_k-j_1}
 
 \end{vmatrix}/W(x_1,\ldots,x_k).
 $$
 In other words, $\hat S_J(x_1,...,x_k)$ is the usual Schur polynomial corresponding to  the sequence $(0,j_2-j_1,....,j_k-j_1)$. 
 Given a sequence $I=(0\le i_0<i_1<i_2<\cdots<i_{m-1})$ with $\gcd(i_1-i_0,\ldots,i_{m-1}-i_0)=1$ consider as before  the set of all  its $\binom m k$ subsequences ${ J}_\kappa$ of length $k$  where the index  $\kappa$ runs over the set of all subsequences of length $k$. Take the corresponding reduced 
 Schur polynomials $\hat S_{ J_\kappa}(x_1,\ldots,x_k)$ and form the ideal $\mathcal I_{I}^{BC}$ in the polynomial ring $\bC[x_1,\ldots,x_k]$   generated by  all $\binom m k$ such Schur polynomials $\hat S_{ J_\kappa}(x_1,\ldots,x_k)$. One can easily see that the  Vandermonde variety $Vd_{k;I}^{BC}\subset \bC^k$ as generated by  $\mathcal I_{I}^{BC}.$  Denote the quotient ring by $\RR_I^{BC}=\RR/\mathcal I_I^{BC}$. 
 
 \begin{conj}\label{conj:rad}
 If $\dim (Vd_{k;I}^{BC})\ge 2$ then  $\mathcal I_{I}^{BC}$ is a radical ideal. 
 \end{conj}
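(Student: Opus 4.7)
My plan is to apply Serre's criterion for reducedness: the Noetherian ring $\RR_I^{BC}$ is reduced---equivalently, $\mathcal I_I^{BC}$ is radical---if and only if it satisfies both $(R_0)$ (regularity at the generic point of each irreducible component) and $(S_1)$ (no embedded associated primes). I would establish $(S_1)$ by arguing that $\RR_I^{BC}$ is Cohen--Macaulay. In the ``regular'' case, when $Vd_{k;I}^{BC}$ has the expected codimension $m-k+1$, the explicit free resolution of $\RR_I^{BC}$ promised in the main theorems of the paper yields Cohen--Macaulayness directly, so $(S_1)$ is automatic. The non-regular situation would have to be handled either by a flat deformation back to the regular case or by showing that $\mathcal I_I^{BC}$ is linked to a Cohen--Macaulay ideal with the same support.

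The harder step is $(R_0)$. Let $C$ be an irreducible component of $Vd_{k;I}^{BC}$ and let $p=(x_1,\ldots,x_k)\in C$ be a generic point; because $p\notin \mathcal{BC}_k$, the coordinates of $p$ are nonzero and pairwise distinct. The generators $\hat S_{J_\kappa}(x_1,\ldots,x_k)$ are, up to the invertible factor $W(x_1,\ldots,x_k)$, the maximal minors of $M_{k;I}$, so by the classical formula for the differential of a determinant each $\partial \hat S_{J_\kappa}/\partial x_j$ evaluated at $p$ is, up to $W(p)$, a linear combination of $(k-1)\times(k-1)$ minors of $M_{k;I}(p)$. I would show that, as $\kappa$ ranges over all $k$-subsequences of $I$, these cofactor combinations span a subspace of $T_p^*\bC^k$ of dimension equal to $\operatorname{codim} C$, proving that $\RR_I^{BC}$ is regular at $p$.

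The hypothesis $\dim(Vd_{k;I}^{BC})\ge 2$ enters precisely in this last step: it is designed to rule out sporadic low-dimensional components on which all generators could share a common higher-order vanishing, a degeneration one expects on isolated fat points or on exceptional curves coming from very special arithmetic configurations of $I$. The main obstacle, in my view, is a combinatorial one: to classify the irreducible components of $Vd_{k;I}^{BC}$ in terms of $I$---essentially a geometric form of the question of which order-$k$ linear recurrences admit a nontrivial solution vanishing on $I$---and then to carry out the Jacobian rank computation uniformly on each component. Both the $S_k$-symmetry of $\mathcal I_I^{BC}$ and the Pl\"ucker-like syzygies among the $\binom{m}{k}$ generators $\hat S_{J_\kappa}$ should be central in reducing the computation to a minimal generating subset, ideally via a Bertini-type genericity argument on the symmetric stratum of $\bC^k \setminus \mathcal{BC}_k$.
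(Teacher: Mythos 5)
First, a point of calibration: the statement you are proving is posed in the paper as Conjecture~\ref{conj:rad}, and the authors give no proof of it (for the $\mathcal{BC}_k$-localized objects they state explicitly that they have proofs only for $k=3$, and even Cohen--Macaulayness of $\RR_I^{BC}$ appears only as part of Conjecture~\ref{th:k*k+1??}, and only for $m=k+1$). So there is no argument in the paper to compare yours against, and your text is a strategy outline rather than a proof. The Serre-criterion framing $(R_0)+(S_1)$ is a reasonable skeleton, but every load-bearing step is left open. For $(S_1)$: Cohen--Macaulayness is exactly as hard as the conjecture you are leaning on, and in the non-regular case (codimension strictly less than $m-k+1$) the Eagon--Northcott complex is not a resolution, so there is no ``explicit free resolution promised in the main theorems'' to invoke. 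The proposed fallbacks are not available either: a flat deformation to the regular case cannot exist in general because $\dim(Vd_{k;I}^{BC})$ jumps as $I$ varies (this is precisely the phenomenon behind Problem~2 and Conjecture~\ref{conj:period}), and flatness would force the dimension to be constant; linkage to a CM ideal ``with the same support'' is asserted without any candidate.

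For $(R_0)$ the gaps are worse. You explicitly defer the classification of the irreducible components of $Vd_{k;I}^{BC}$ and the uniform Jacobian rank computation, calling it ``the main obstacle''---but that \emph{is} the proof; nothing remains once it is done. Two concrete failure modes are not addressed. (1) The cofactor formula you cite shows that at a point where $M_{k;I}(p)$ has corank $\ge 2$, \emph{every} first partial of \emph{every} maximal minor vanishes, so $(R_0)$ fails pointwise on the higher-corank locus; you would need to prove no irreducible component is generically of corank $\ge 2$, which is again a classification problem. (2) Radicality of $\mathcal I_I^{BC}$ is a statement in $\bC[x_1,\ldots,x_k]$, not in the localization away from $\mathcal{BC}_k$: your assumption that a generic point $p$ of any component satisfies $p\notin\mathcal{BC}_k$ presupposes that $V(\mathcal I_I^{BC})$ has no component inside $\mathcal{BC}_k$, and also note that $\hat S_J$ is not the maximal minor divided by $W$ but that quantity further divided by $(x_1\cdots x_k)^{j_1}$, a distinction that is invisible off the coordinate hyperplanes but matters for the global ideal. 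Finally, the hypothesis $\dim(Vd_{k;I}^{BC})\ge 2$ is never actually used anywhere in your argument; a genuine proof must identify exactly where it enters, since the conjecture is presumably false without it.
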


Notice that considered as sets the restrictions to $\bC^k\setminus \mathcal{BC}_k$  of all three varieties $Vd_{k;I}^{\bf c}$, $Vd_{k;I}^{\A}$, $Vd_{k;I}^{BC}$  coincide with  what we call  the {\em open Vandermonde variety}  $Vd^{op}_{k;I}$ which is  the subset of all matrices  of the form  $M_{k;I}$ with three properties:\\ 
(i) rank is smaller than $k$;\\ 
(ii) all $x_i$'s are non-vanishing;\\ 
(iii) all $x_i$'s are pairwise distinct.\\

\smallskip
Thus set-theoretically all the  differences between the three Vandermonde varieties are concentrated on the hyperplane arrangement $\mathcal{BC}_k$. Also from the above definitions it is obvious that $Vd^{op}_{k;I}$ and $Vd_{k;I}^{BC}$ are invariant under addition of an arbitrary integer  to $I$. 
The relation between the linear recurrence variety $V_{k;I}$ and the open Vandermonde variety $Vd_{k;I}^{op}$  is quite straight-forward. Namely,  consider the standard Vieta map:
\begin{equation}\label{Vieta}
Vi : \bC^k\to L_k
\end{equation}
sending an arbitrary  $k$-tuple $(x_1,...,x_k)$ to the polynomial $t^k+\al_1t^{k-1}+\al_2t^{k-2}+\cdots+\al_k$ whose roots are   $x_1,....,x_k$. Inverse images of the Vieta map are exactly the orbits of the standard $S_k$-action on $\bC^k$ by permutations of coordinates. Thus, the Vieta map sends a homogeneous and symmetric polynomial to a weighted homogeneous polynomial.

Define  the {\em open linear recurrence variety}  $V^{op}_{k;I}\subseteq V_{k;I}$ of a pair $(k;I)$ as consisting of all recurrences in $V_{k;I}$  with all distinct characteristic roots distinct. The following  statement is obvious. 

\begin{lm}\label{lm:push} The map $Vi$ restricted to $Vd^{op}_{k;I}$ gives an unramified $k!$-covering of  the set $V^{op}_{k;I}$. 
\end{lm}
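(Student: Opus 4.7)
The plan is to verify three things separately: (a) the image $Vi(Vd^{op}_{k;I})$ equals $V^{op}_{k;I}$; (b) every fiber of the restricted map has exactly $k!$ points; and (c) the differential of $Vi$ is an isomorphism at every point of $Vd^{op}_{k;I}$, so the restriction is a local biholomorphism.

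For (a) I would use the explicit description of solutions of a recurrence~\eqref{eq:Basic} with simple non-zero characteristic roots recalled in the introduction, namely $u_n = c_1 x_1^n + \cdots + c_k x_k^n$. Note that a $k$-tuple $(x_1,\dots,x_k)$ lies in $Vd^{op}_{k;I}$ precisely when its entries are pairwise distinct, all non-zero, and the columns of $M_{k;I}(x_1,\dots,x_k)$ are linearly dependent; the latter is equivalent to the existence of a non-trivial $c=(c_1,\dots,c_k)$ with $\sum_j c_j x_j^{i}=0$ for every $i\in I$. Under the Vieta map such a tuple is sent to a polynomial in $L_k^*$ (the condition $\alpha_k=(-1)^k\prod x_j\neq0$ is guaranteed by $x_j\neq0$), all of whose roots are distinct, and the existence of $c$ translates into the existence of a non-trivial solution of the recurrence vanishing on $I$. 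Thus $Vi(Vd^{op}_{k;I})\subseteq V^{op}_{k;I}$, and reading the equivalence backwards shows that every recurrence in $V^{op}_{k;I}$ comes from such a tuple, giving surjectivity.

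For (b) I would recall that the fibers of $Vi: \bC^k\to L_k$ are the $S_k$-orbits on ordered $k$-tuples; over any polynomial with $k$ distinct roots this orbit has cardinality $k!$. The characterization in the previous paragraph is $S_k$-symmetric (permuting the $x_j$'s simultaneously permutes the rows of the kernel vector $c$), so the entire $S_k$-orbit of a point of $Vd^{op}_{k;I}$ lies in $Vd^{op}_{k;I}$. Hence every fiber of $Vi|_{Vd^{op}_{k;I}}$ over a point of $V^{op}_{k;I}$ consists of exactly $k!$ points.

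For (c) I would compute the Jacobian of $Vi$ with respect to $(x_1,\dots,x_k)$; up to sign it equals the Vandermonde determinant $\prod_{i<j}(x_i-x_j)$, which is non-vanishing on $\bC^k\setminus\A_k$, in particular on $Vd^{op}_{k;I}$. Consequently the differential of $Vi$ is everywhere invertible on $Vd^{op}_{k;I}$, so the map is étale, and combining with (a) and (b) one concludes that $Vi|_{Vd^{op}_{k;I}}\to V^{op}_{k;I}$ is an unramified $k!$-sheeted covering. I do not expect any genuine obstacle; the whole statement is a direct dictionary between the rank condition defining $Vd^{op}_{k;I}$ and the existence of a vanishing solution of the recurrence with simple roots, together with the standard étaleness of Vieta away from $\A_k$.
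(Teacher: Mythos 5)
Your argument is correct. The paper offers no proof of this lemma (it is declared ``obvious''), so there is nothing to compare against; your three-step argument --- the dictionary between the rank condition on $M_{k;I}$ and a non-trivial solution $u_n=\sum_j c_jx_j^n$ vanishing on $I$, the $S_k$-invariance of $Vd^{op}_{k;I}$ giving fibers of size exactly $k!$, and \'etaleness of $Vi$ off $\A_k$ via the Vandermonde Jacobian --- is precisely the intended justification. Two details worth making explicit: non-triviality of the solution for $c\neq 0$ uses invertibility of the Vandermonde matrix at distinct nonzero $x_j$'s, and the covering property (not just a surjective local biholomorphism with constant fiber size) follows because $Vd^{op}_{k;I}$ is a saturated subset of the standard $k!$-sheeted covering $\bC^k\setminus\A_k\to L_k\setminus\{\mathrm{discriminant}=0\}$, so the restriction over its image inherits evenly covered neighborhoods.
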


  Unfortunately at the present moment the following natural question is still open. 

\medskip
\noindent
{\em Problem 1.} Is it true that  for any pair $(k;I)$ one has that $\overline{V^{op}_{k;I}}=V_{k;I}$ where  $\overline{V^{op}_{k;I}}$ is the set-theoretic closure of  ${V^{op}_{k;I}}$ in $L_k^*$? If  'not', then under what additional assumptions?

\medskip
Our main results are as follows. Using the Eagon-Northcott resolution of determinantal ideals, we determine the resolution, and hence the Hilbert series and degree
of $\RR_I^\A$ in Theorem~\ref{th:general}. We give an alternative calculation of the degree using the Giambelli-Thom-Porteous formula in Proposition~\ref{pr:kaz}.
In the simplest non-trivial case, when $m=k+1$, we get more detailed information about $Vd_{k;I}^{\A}$. We prove that its codimension is 2, and that $\RR_I^\A$
is Cohen-Macaulay. We also discuss minimal sets of generators of $\mathcal I_I$, and determine when we have a complete intersection in Theorem~\ref{th:k*k+1}.
(The proof of this theorem gives a lot of interesting relations between Schur polynomials, see Theorem~\ref{Arel}.) 
In this case the variety has the expected codimension, which is not always the case if $m>k+1$. In fact  our computer experiments suggest that then the codimension
rather seldom is the expected one. In case $k=3$, $m=5$, we show that having the expected codimension is equivalent to $\RR_I^\A$ being a complete intersection and
that $\mathcal I_I$ is generated by three complete symmetric functions.  Exactly the  problem (along with many other similar questions)  when three complete symmetric functions constitute a regular sequence was considered in  an recent paper \cite{CKW} where the authors formulated a detailed conjecture. We slightly strengthen their conjecture below.

\smallskip
For the $\mathcal{BC}_k$-localized variety $Vd_{k;I}^{BC}$ we have only proofs when $k=3$, but we present Conjectures~\ref{th:k*k+1??} and~\ref{Brel}, supported
by many calculations.  We end the paper with a section which describes the connection of our work with the fundamental problems in linear recurrence
relations.

\medskip 
\noindent 
{\em Acknowledgements.} The authors want to thank Professor Maxim Kazarian (Steklov Institute of Mathematical Sciences) for his help with Giambelli-Thom-Porteous formula, Professor Igor Shparlinski (Macquarie University)  
for highly  relevant  information on the Skolem-Mahler-Lech theorem and Professors Nicolai Vorobjov (University of Bath)  and Michael Shapiro (Michigan State University)  for discussions. We are especially grateful to Professor Winfried Bruns (University of Osnabr\"uck) for pointing out important information on determinantal ideals. 

\section{Results and conjectures on Vandermonde varieties}
We start by proving that $\overline V_{k;I}$  is an affine algebraic variety, see Introduction.

\begin{prop}\label{pr:alg}
For any pair $(k;I)$ the set $\overline V_{k;I}$  is an affine algebraic variety. Therefore, $V_{k;I}=\overline V_{k;I}\vert_{L^*_k}$
 is a quasi-affine variety. \end{prop}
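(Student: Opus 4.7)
The plan is to exhibit polynomial equations in the coefficients $(\al_{1},\ldots,\al_{k})$ that cut out $V_{k;I}$ inside $L_{k}^{*}$; once this is done, the fact that $\overline{V_{k;I}}$ is an affine algebraic variety follows automatically by taking Zariski closure in $L_{k}$. The crucial input, as emphasized in the remark following the statement, is that such polynomial equations exist precisely because the elements of $I$ are integers.

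To construct the equations, let $T(\al)\in Mat(k,k)$ denote the companion matrix of the recurrence \eqref{eq:Basic}. Then any sequence $(u_{n})_{n\ge 0}$ satisfying \eqref{eq:Basic} obeys $(u_{n},u_{n+1},\ldots,u_{n+k-1})^{\top}=T(\al)^{n}(u_{0},u_{1},\ldots,u_{k-1})^{\top}$, so iterating \eqref{eq:Basic} yields, for every integer $n\ge 0$, an identity $u_{n}=\sum_{j=0}^{k-1}p_{n,j}(\al)\,u_{j}$ with $p_{n,j}(\al)\in \bC[\al_{1},\ldots,\al_{k}]$ (these $p_{n,j}$ are just entries of $T(\al)^{n}$). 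Assembling these coefficients along the indices of $I$ yields an $m\times k$ matrix $N_{k;I}(\al):=\bigl(p_{i_{\ell},j}(\al)\bigr)_{0\le\ell\le m-1,\,0\le j\le k-1}$, every entry of which is a polynomial in $\al$. By construction, $\al\in V_{k;I}$ iff the homogeneous linear system $N_{k;I}(\al)\,(u_{0},\ldots,u_{k-1})^{\top}=0$ admits a non-zero solution, iff $\mathrm{rank}\,N_{k;I}(\al)<k$, iff all $\binom{m}{k}$ maximal minors of $N_{k;I}(\al)$ vanish simultaneously.

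Let $\widetilde{V}_{k;I}\subset L_{k}$ be the algebraic subvariety cut out by these maximal minors. The previous step gives $V_{k;I}=\widetilde{V}_{k;I}\cap L_{k}^{*}$, so $V_{k;I}$ is Zariski closed in the affine open $L_{k}^{*}$ and is therefore a quasi-affine variety, while its closure $\overline{V_{k;I}}$ in $L_{k}$ is the union of those irreducible components of $\widetilde{V}_{k;I}$ which meet $L_{k}^{*}$, and in particular is an affine algebraic variety. No serious obstacle is expected; the one delicate point is that the matrix entries $p_{n,j}(\al)$ are polynomial in $\al$ only because $n$ is a nonnegative integer, so that $T(\al)^{n}$ makes polynomial sense. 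For a non-integer (real or complex) exponent one would have to express $u_{n}$ via the characteristic roots $x_{j}$ as $\sum c_{j}x_{j}^{n}$, producing equations that are at best analytic in $\al$, which explains why the analogous assertion fails for $I\subset\bR$ or $I\subset\bC$.
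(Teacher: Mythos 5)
Your proof is correct, but it takes a genuinely different route from the paper's. The paper works on the side of the characteristic roots: it stratifies $V_{k;I}$ by the partition $\la\vdash k$ recording root multiplicities, realizes each stratum $V_{k;I}^\la$ as the image under a Vieta-type map $Vi_\la$ of a quasi-affine set of degenerate generalized Vandermonde matrices, concludes that $V_{k;I}$ is constructible, and then invokes Mumford's result that the strong closure of a constructible set equals its Zariski closure. You instead work directly in the coefficients $(\al_1,\ldots,\al_k)$: the entries of the powers $T(\al)^{n}$ of the companion matrix give polynomial expressions $u_n=\sum_j p_{n,j}(\al)u_j$, so $V_{k;I}$ is cut out in $L_k^*$ by the maximal minors of the matrix $\bigl(p_{i_\ell,j}(\al)\bigr)$. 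This is more elementary (no Chevalley-type constructibility of images is needed) and yields the stronger conclusion that $V_{k;I}$ is actually Zariski closed in $L_k^*$, not merely constructible; it also isolates exactly where integrality of $I$ enters, which is the point the paper flags as non-obvious. Two small caveats: if $I$ contains negative integers you should either invoke the translation invariance $V_{k;I}=V_{k;I+l}$ noted in the paper, or observe that on $L_k^*$ the entries of $T(\al)^{-1}$ are polynomials divided by $\al_k$, so the minors are still polynomial after clearing a power of $\al_k$; and your last step identifying the \emph{strong} closure $\overline{V_{k;I}}$ with the union of the irreducible components of $\widetilde V_{k;I}$ not contained in $\{\al_k=0\}$ still uses the standard fact (the same Mumford citation the paper uses) that over $\bC$ the strong closure of a Zariski-dense Zariski-open subset of an irreducible variety is the whole variety. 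With those two remarks supplied, the argument is complete.
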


\begin{proof} We will show that for any pair $(k;I)$ the variety $\overline V_{k;I}$ of linear recurrences is  constructible. Since it is by definition closed in the usual topology of $L_k\simeq \bC^k$  it is  algebraic. The latter fact follows  from  \cite{Mu}, I.10 Corollary 1 claiming that 
if $Z \subset X$ is a constructible subset of a variety, then the Zarisky
closure and the strong closure of $Z$ are the same. Instead of showing that $\overline V_{k;I}$ is constructible we prove that $V_{k;I}\subset L_k^*$ is constructible. Namely, we can use an analog of Lemma~\ref{lm:push} to construct a natural stratification of $V_{k;I}$ into the images of quasi-affine sets under appropriate Vieta maps.  Namely, let us stratify $V_{k;I}$ as 
$V_{k;I}=\bigcup_{\la \vdash k} V_{k;I}^\la$ where $\la \vdash k$ is an arbitrary  partition of $k$ and $V_{k;I}^\la$  is the subset 
 of $V_{k;I}$ consisting of all recurrence relations of length exactly $k$ which has a non-trivial solution vanishing at each point of $I$ and whose characteristic polynomial determines the partition $\la$ of its degree $k$. In other words, if $\la=(\la_1,...,\la_s),\; \sum_{j=1}^s\la_j=k$ then the characteristic polynomial should have $s$ distinct roots of multiplicities $\la_1,...,\la_s$ resp.  Notice that any of these $V_{k;I}^\la$ can be empty including the whole $V_{k;I}$ in which case there is nothing to prove. Let us now show that each $V_{k;I}^\la$ is the image  under  the appropriate Vieta map of  a set similar to the open Vandermonde variety. Recall that if $\la=(\la_1,...,\la_s),\; \sum_{j=1}^s \la_j=k$ and $x_1,..,x_s$ are the distinct roots with the multiplicities $\la_1,...,\la_s$ respectively of the linear recurrence~\eqref{eq:Basic} then the general solution of \eqref{eq:Basic} has the form 
 $$u_{n}=P_{\la_1}(n)x_1^{n}+P_{\la_2}(n)x_2^n+...+P_{\la_s}(n)x_s^n,$$
  where $P_{\la_1}(n),..., P_{\la_s}(n)$ are arbitrary polynomials in the variable $n$ of degrees $\la_1-1, \la_2-1,....,\la_{s-1}$ resp. 
 Now, for a given $\la\vdash k$ consider the set of matrices 
 $$M^\la_{k;I}=$$
 $$\begin{pmatrix} x_1^{i_0}&i_0x_1^{i_0}&...& i_0^{\la_1-1}x_1^{i_0}&...& x_s^{i_0}&i_0x_s^{i_0}&...& i_0^{\la_s-1}x_s^{i_0}\\ 
 x_1^{i_1}&i_1x_1^{i_1}&...& i_1^{\la_1-1}x_1^{i_1}&...& x_s^{i_1}&i_1x_s^{i_1}&...& i_1^{\la_s-1}x_s^{i_1}\\ 
 \cdots&\cdots&\cdots&\cdots& \cdots&\cdots&\cdots&\cdots&\cdots\\
 x_1^{i_{m-1}}&i_{m-1}x_1^{i_{m-1}}&...& i_{m-1}^{\la_1-1}x_1^{i_{m-1}}&...& x_s^{i_{m-1}}&i_1x_s^{i_{m-1}}&...& i_{m-1}^{\la_s-1}x_s^{i_{m-1}}
 \end{pmatrix}.$$
In other words,  we are taking the fundamental solution $x_1^n,nx_1^n,...,n^{\la_1-1}x_1^n,$ $ x_2^n,$ $ nx_2^n,$ $...,n^{\la_2-1}x_2^n,....,$ $ x_s^n, nx_s^n,...,n^{\la_s-1}x_s^n$ of \eqref{eq:Basic} under the assumption that the characteristic polynomial gives a partition $\la$ of $k$  and we are evaluating each function in this system at  $i_0, i_1,..., i_{m-1}$ resp. We now define the variety $Vd^\la_{k;I}$ as the subset of matrices of the form $M^\la_{k;I}$ such that:  (i) the rank of such a matrix is smaller than $k$; (ii) all $x_i$ are distinct; (iii) all $x_i$ are non-vanishing. Obviously, $Vd^\la_{k;I}$ is a  quasi-projective variety in $\bC^s$. Define  the analog $Vi_\la: \bC^s\to L_k$ which sends an $s$-tuple $(x_1,...,x_s)\in \bC^s$ to the polynomials $\prod_{j=1}^s(x-x_j)^{\la_j}\in L_k$ of the Vieta map $Vi$.  One can easily see that 
 $Vi_\la$ maps $Vd^\la_{k;I}$ onto $V_{k;I}^\la$. Applying this construction to all partitions $\la\vdash k$ we will obtain that 
 $V_{k;I}=\bigcup_{\la \vdash k} V_{k;I}^\la$  is constructible which finishes the proof.   
\end{proof}

The remaining part of the paper is devoted to the study of the Vandermonde varieties $Vd_{k;I}^\A$ and $Vd_{k;I}^{BC}$. We start with the $\A_k$-localized variety $Vd_{k;I}^\A$. Notice that if $m=k$ the variety $Vd_{k;I}^\A\subset \bC^k$ is an irreducible hypersurface given by  the equation $S_I=0$ and its degree equals $\sum_{j=0}^{k-1}i_j-\binom{k}{2}$. We will need the following alternative description of the ideal $\mathcal I_I^\A$ in the general case. Namely, using the Jacobi-Trudi identity for  the Schur polynomials we get the following statement.

\begin{lm}\label{lm:deter}
For any pair $(k;I),\;I=\{i_0<i_1<...<i_{m-1}\}$ the  ideal $\mathcal I_I^\A$ is generated by all $k\times k$-minors of the $m\times k$-matrix

\begin{equation}\label{Repr} 
H_{k;I}=\left(\begin{array}{cccc}
h_{i_0-(k-1)}&h_{i_0-(k-2)}&\cdots&h_{i_0}\\
h_{i_{1}-(k-1)}&h_{i_{1}-(k-2)}&\cdots&h_{i_{1}}\\
\vdots&\vdots&\vdots&\vdots\\
h_{i_{m-1}-(k-1)}&h_{i_{m-1}-(k-2)}&\cdots&h_{i_{m-1}}\\
\end{array}\right).
\end{equation}
Here $h_i$ denotes the complete symmetric function of degree $i$,
 $h_i=0$ if $i<0$, $h_0=1$.
\end{lm}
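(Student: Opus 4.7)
The plan is to prove that the two ideals coincide by showing that their generating sets agree term-by-term (up to sign), via the Jacobi-Trudi identity. Recall that for a partition $\lambda=(\lambda_1\ge\lambda_2\ge\cdots\ge\lambda_k\ge 0)$ the Jacobi-Trudi formula reads
$$s_\lambda(x_1,\ldots,x_k)=\det\bigl(h_{\lambda_i-i+j}\bigr)_{1\le i,j\le k},$$
with the convention $h_r=0$ for $r<0$ and $h_0=1$, exactly the convention used in the statement. So every Schur polynomial is, tautologically, a determinant in complete symmetric functions; what I have to verify is that the Jacobi-Trudi determinants attached to the $k$-subsequences $J_\kappa\subset I$ are precisely the $k\times k$ minors of $H_{k;I}$.

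First, I would fix a $k$-subsequence $J=(j_1<j_2<\cdots<j_k)$ of $I$ and pass from the strict sequence $J$ to the associated partition. Using the standard dictionary $\mu_r=j_{k-r+1}$ (strictly decreasing) and $\lambda_r=\mu_r-(k-r)=j_{k-r+1}-(k-r)$, the fact that $j_{k-r+1}>j_{k-r}$ translates exactly into $\lambda_r\ge\lambda_{r+1}$, so $\lambda$ is indeed a partition with $\lambda_k=j_1\ge 0$. Reversing the rows of the alternant in the definition of $S_J$ shows $S_J=\pm s_\lambda$, with a fixed sign $(-1)^{\binom{k}{2}}$ depending only on $k$.

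Next, I would unwind the Jacobi-Trudi entry in row $r$, column $c$: $h_{\lambda_r-r+c}=h_{j_{k-r+1}-k+c}$. Letting $c$ run from $1$ to $k$, row $r$ becomes
$$\bigl(h_{j_{k-r+1}-(k-1)},\,h_{j_{k-r+1}-(k-2)},\,\ldots,\,h_{j_{k-r+1}}\bigr),$$
which is exactly the row of $H_{k;I}$ indexed by $j_{k-r+1}$. Thus, after a final row-reversal (another $(-1)^{\binom{k}{2}}$), the Jacobi-Trudi matrix for $s_\lambda$ is the $k\times k$ submatrix of $H_{k;I}$ on the rows indexed by $j_1<j_2<\cdots<j_k$. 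Hence $S_{J_\kappa}$ equals, up to an overall sign, the corresponding $k\times k$ minor of $H_{k;I}$. Since the map $J_\kappa\mapsto\{\text{rows }j_1,\ldots,j_k\text{ of }H_{k;I}\}$ is a bijection between $k$-subsequences of $I$ and $k$-row selections of $H_{k;I}$, the two generating sets coincide (up to signs), which is enough to conclude that the ideals $\mathcal I_I^\A$ and the ideal of maximal minors of $H_{k;I}$ are equal.

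There is no serious obstacle here; the only thing to keep track of carefully is the conversion between the paper's convention (strictly increasing exponent sequences) and the partition convention used in Jacobi-Trudi, which amounts to a row reversal that contributes an irrelevant global sign. A brief remark should also be included to the effect that minors corresponding to "degenerate" row choices cause no trouble: they are automatically of the form $\pm s_\lambda$ for the (already valid) partition $\lambda$ arising from strictly ordered rows, and the inequality $\lambda_k=j_1\ge 0$ is guaranteed by $i_0\ge 0$, so no Schur polynomial with negative indices ever appears.
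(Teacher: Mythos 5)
Your proof is correct and follows exactly the route the paper takes: the paper's own proof is the single sentence that the claim ``follows directly from the standard Jacobi--Trudi identity,'' and your argument is simply that identity carried out explicitly, with the row-reversal bookkeeping between increasing exponent sequences and partitions that the paper leaves implicit. Nothing is missing; the extra care about signs and about $\lambda_k=j_1\ge 0$ is harmless and consistent with the paper's standing assumption $i_0\ge 0$.
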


\begin{proof}
 It follows directly from the standard Jacobi-Trudi identity for the Schur polynomials, see e.g. \cite {Tam}. 
\end{proof}

In particular, Lemma~\ref{lm:deter} shows that $Vd_{k;I}^\A$ is an determinantal variety in the usual sense. When working with $Vd_{k;I}^\A$ and unless the opposite is explicitly mentioned  we will  assume that $I=\{0<i_1<...<i_{m-1}\}$, i.e. that $i_0=0$ and that additionally $\gcd(i_1,...,i_{m-1})=1$. Let us first study  some properties of $Vd_{k;I}^\A$ in the so-called {\em regular case,} i.e.  when its dimension coincides with the expected one. 

Namely, consider  the set  $\Omega_{m,k}\subset Mat(m,k)$ of all $m\times k$-matrices having positive corank. It is well-known that   $\Omega_{m,k}$ has codimension equal to $m-k+1$. Since $Vd_{k;I}^{\bf c}$ coincides with the pullback of $\Omega_{m,k}$ under the map $M_{k;I}$ and  $Vd_{k;I}^{\A}$ is closely related to it (but with trivial pathology on $\A_k$ removed)  the expected codimension of 
$Vd_{k;I}^\A$ equals $m-k+1$. We call a pair $(k;I)$ {\em $\A$-regular} if  $k\le m\le 2k-1$ (implying that the expected dimension of $Vd_{k;I}^\A$ is positive) and the actual codimension of $Vd_{k;I}^\A$ coincides with its expected codimension.  We now describe the Hilbert series of the quotient ring $\RR_I^\A$ in the case of a arbitrary  regular pair $(k;I)$ using  the well-known resolution of determinantal ideals of Eagon-Northcott \cite{EN}. 

To explain the notation in the following theorem, we introduce two
gradings, $\rm{tdeg}$ and $\deg$, on $\bC[t_0,\ldots,t_{m-1}]$. The first one is  the usual
grading induced by ${\rm tdeg}(t_i)=1$ for all $i$, and a second one is  induced
by $\deg(t_i)=-i$. In the next theorem $M$ denotes a monomial in
$\bC[t_0\ldots,t_{m-1}]$.

\begin{theo}\label{th:general} 
In the above notation, and with $I'=\{ i_1,\ldots,i_{m-1}\}$, \\
\begin{itemize}
\item[(a)]  the Hilbert series $Hilb_I^\A(t)$ of $\RR_I^\A=\RR/\mathcal I_I^\A$ is given by
$$Hilb_I^\A(t)=\frac{1-\sum_{i=0}^{m-k}((-1)^{i+1}\sum_{J\subseteq I',|J|=k+i}t^{s_J}\sum_{M\in N_i}t^{\deg(M)})}{(1-t)^k},$$
where $s_J=\sum_{j\in J}j-{k\choose2}$ and $N_i=\{ M;{\rm tdeg}(M)=i\}$.\\

\item[(b)] The degree of $\RR_I^\A$ is $T^{(m-k+1)}(1)(-1)^{m-k+1}/(m-k+1)!$, where $T(t)$ is the numerator in \rm(a).
\end{itemize}
\end{theo}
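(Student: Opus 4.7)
The plan is to apply the Eagon--Northcott (EN) resolution. By Lemma~\ref{lm:deter}, $\mathcal{I}_I^\A$ is the ideal of $k\times k$ minors of $H_{k;I}$. Since $i_0=0$, the first row of $H_{k;I}$ is $(0,\ldots,0,1)$; expanding any $k$-minor involving it identifies such a minor with a $(k-1)\times(k-1)$ minor of the reduced $(m-1)\times(k-1)$ matrix $\tilde{H}_{k;I}$ obtained by deleting the first row and the last column. Thus $\mathcal{I}_I^\A$ is equally the ideal of maximal minors of $\tilde{H}_{k;I}$, a presentation on which the EN complex becomes minimal (no unit entries remain).

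Under $\A$-regularity we have $\mathrm{codim}(\mathcal{I}_I^\A)=m-k+1=(m-1)-(k-1)+1$, which is the perfection codimension for maximal minors of $\tilde{H}_{k;I}$; hence the EN complex attached to $\tilde{H}_{k;I}$ is a minimal graded free resolution of $\RR_I^\A$ of length $m-k+1$. I would equip $\RR^{m-1}$ and $\RR^{k-1}$ with the row- and column-grading $a_r=i_r$ ($r\in I'$) and $b_s=k-s$ ($1\le s\le k-1$) so that the presenting map is degree zero and the graded Betti numbers can be read off directly.

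The $p$-th EN term is $F_p=\wedge^{(k-1)+(p-1)}\RR^{m-1}\otimes\mathrm{Sym}^{p-1}((\RR^{k-1})^*)\otimes\wedge^{k-1}((\RR^{k-1})^*)$, with graded basis indexed by pairs $(J,M)$: here $J\subseteq I'$ has size $k+p-2$ (matching, after the standard reindexing $i=p-1$, the theorem's range) and $M$ is a tdeg-$(p-1)$ monomial in the bookkeeping variables encoding the $\mathrm{Sym}^{p-1}$ factor. A direct degree count identifies the shift of $(J,M)$ with $s_J+\deg(M)$ in the conventions of the statement. Collecting the alternating Euler characteristic $\sum_p(-1)^p\,\mathrm{Hilb}(F_p)=T(t)/(1-t)^k$ and reindexing $p=i+1$ produces the numerator claimed in part~(a). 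The main obstacle is keeping the bigraded bookkeeping straight across this reindexing and matching the symmetric-power combinatorics to the monomial-sum form of the theorem.

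Part~(b) is then a standard consequence: since the EN resolution has length exactly $c:=m-k+1$, the quotient $\RR_I^\A$ is Cohen--Macaulay of codimension $c$, and one may factor $T(t)=(1-t)^c Q(t)$ with $Q(1)=\deg(\RR_I^\A)$. Differentiating $c$ times at $t=1$ annihilates all lower-order terms and yields $T^{(c)}(1)=(-1)^c\,c!\,Q(1)$, which rearranges to the stated expression.
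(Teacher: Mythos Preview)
Your approach is essentially the paper's: both invoke the Eagon--Northcott complex for the ideal of maximal minors and read off the Hilbert series and degree from the resulting graded free resolution, with part~(b) argued identically via $T(t)=(1-t)^{m-k+1}P(t)$ and differentiation. The only difference is cosmetic: you first strip the unit row of $H_{k;I}$ (using $i_0=0$) and pass to the $(m-1)\times(k-1)$ matrix $\tilde H_{k;I}$, so that the EN complex you write down is already minimal, whereas the paper applies EN to the full $m\times k$ matrix, obtains a non-minimal resolution, and then notes that the superfluous basis elements (those with $t_0\mid T$) cancel in the alternating sum for the Hilbert series---indeed the paper explicitly remarks that its resolution is never minimal for exactly the reason you exploit. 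One small point worth tightening in your write-up: you only argue that $k$-minors of $H_{k;I}$ \emph{containing} the first row become $(k-1)$-minors of $\tilde H_{k;I}$; to conclude that the two determinantal ideals coincide you should also observe (e.g.\ by the row operation that clears column $k$ below the unit, which leaves the first $k-1$ columns untouched) that the remaining $k$-minors already lie in this ideal.
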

\begin{proof}  According to \cite {EN}  provided that $\mathcal I_I^\A$  has the expected codimension $m-k+1$, it is known to be Cohen-Macaulay and it has a resolution  of the form
\begin{equation}\label{resol}
0\rightarrow F_{m-k+1}\rightarrow\cdots\rightarrow F_1\rightarrow \RR\rightarrow \RR_I^\A\rightarrow 0,
\end{equation} 
where $F_j$ is free module  over $\RR=\bC[x_1,\ldots,x_k]$ of rank ${m\choose k+j-1}{k+j-2\choose k-1}$. We denote the basis elements of $F_j$ by $M_IT$, where $I\subseteq\{i_0,\ldots.i_{m-1}\}$,
$|I|=k+j-1$, and $T$ is an arbitrary monomial in $\{ t_0,\ldots,t_{k-1}\}$ of degree $j-1$. 
 If $M_I=\{ i_{l_1},\ldots,i_{l_{k+j-1}}\}$ and $T=t_{j_1}^{s_1}\cdots t_{j_r}^{s_r}$ with $s_i>0$ for all $i$ and $\sum_{i=1}^rs_i=j-1$, then, in our situation, $d(M_IT)=\sum_{i=1}^r(\sum_{l=1}^{k+j-1}(-1)^{k+1}h_{i_{k_l}-j_i}M_{I\setminus\{ i_{k_l}\}})T/t_{j_i}$. Here $\deg(M_I)=\sum_{n=1}^{k+j-1}i_{l_j}-{k\choose2}$ and $\deg(t_i)=-i$. (Note that tdeg$(t_i)=1$ but $\deg(t_i)=-i$.) Thus $\deg(M_IT)=\sum_{n=1}^{k+j-1}i_{l_n}+\sum_{i=1}^rs_ij_i-{k\choose2}$ if $M_I=\{ i_{l_1},\ldots,i_{l_{k+j-1}}\}$ and $T=t_{j_1}^{s_1}\cdots t_{j_r}^{s_r}$.  Observe that this resolution is never minimal. Indeed, for any sequence $I=\{0=i_0<i_1\cdots<i_{m-1}\}$, we only need the Schur polynomials coming from subsequences starting with 0, so $\mathcal I_I^\A$ is generated by at most ${m-1\choose k-1}$ Schur polynomials instead of totally ${m\choose k}$; see also discussions preceding the proof of Theorem~\ref{th:k*k+1} below.  Now, if $\mathcal J$ is an arbitrary homogeneous ideal in $\RR=\bC[x_1,\ldots,x_k]$ and $\RR/\mathcal J$ has a resolution $$0\rightarrow\oplus_{i=1}^{\beta_r}\RR(-n_{r,i})\rightarrow\cdots\rightarrow\oplus_{i=1}^{\beta_1}\RR(-n_{1.i})\rightarrow \RR\rightarrow \RR/\mathcal J\rightarrow 0,$$ then  
the Hilbert series of $\RR/\mathcal J$ is given by $$\frac{1-\sum_{i=1}^{\beta_1}t^{n_{1,i}}+\cdots+(-1)^r\sum_{i=1}^{\beta_r}t^{n_{r,i}}}{(1-t)^k}.$$
For the resolution \eqref{resol}, all terms coming from $M_IT$ with $t_0|T$ cancel. Thus we get the claimed Hilbert series. If the Hilbert series is given by $T(t)/(1-t)^k=P(t)/((1-t)^{\dim(\RR/\mathcal I_I)}$, then the degree of the corresponding variety equals $P(1)$. We have $T(t)=(1-t)^{m-k+1}P(t)$, or after differentiating  the latter identity $m-k+1$ times we get $P(1)=T^{(m-k+1)}(1)(-1)^{m-k+1}/(m-k+1)!$.
\end{proof} 

\begin{ex} For the case $3\times5$ with $I=\{0,i_1,i_2,i_3,i_4)$ , if the ideal $Vd^\A_{k;I}$ has the right codimension, we get that its Hilbert series equals $T(t)/(1-t)^3$, where
$$T(t)=1-t^{i_1+i_2-2}-t^{i_1+i_3-2}-t^{i_1+i_4-2}-t^{i_2+i_3-2}-t^{i_2+i_4-2}-t^{i_3+i_4-2}+t^{i_1+i_2+i_3-3}+$$
$$t^{i_1+i_2+i_4-3}+t^{i_1+i_3+i_4-3}+t^{i_2+i_3+i_4-3}+t^{i_1+i_2+i_3-4}+t^{i_1+i_2+i_4-4}+t^{i_1+i_3+i_4-4}+$$
$$t^{i_2+i_3+i_4-4}-t^{i_1+i_2+i_3+i_4-3}-t^{i_1+i_2+i_3+i_4-4}-t^{i_1+i_2+i_3+i_4-5}$$ and the degree of $Vd_{k;I}^\A$ equals  
$$i_1i_2i_3+i_1i_2i_4+i_1i_3i_4+i_2i_3i_4-3(i_1i_2+i_1i_3+i_1i_4+i_2i_3+i_2i_4+i_3i_4)+7(i_1+i_2+i_3+i_4)-15.$$
\end{ex}

An alternative way to calculate $\deg(Vd_{k;I}^\A)$ is to use the Giambelli-Thom-Porteous formula, see e.g. \cite {Fu}. 
The next result  corresponded to the authors by M.~Kazarian explains how to do that.  

\begin{prop}\label{pr:kaz}  Assume that $Vd_{k;I}^\A$ has the expected codimension $m-k+1$. Then  its degree (taking multiplicities of the components into
account) is equal to the coefficient of $t^{m-k+1}$ in the Taylor expansion of the series
$$\frac{\prod_{j=1}^{m-1}(1+i_jt)}{\prod_{j=1}^{k-1}(1+jt)}.$$
More explicitly,
$$\deg(Vd_{k;I}^\A) =\sum_{j}^{m-k+1}\si_j(I)u_{m-k+1-j}$$
where $\si_j$ is the $j$th elementary symmetric function of the entries $(i_1,...,i_{m-1})$ and $u_0, u_1, u_2,...$ are the coefficients in the Taylor expansion of $\prod_{j=1}^{k-1}\frac{1}{1+jt}$, i.e. $u_0+u_1t+u_2t^2+...=\prod_{j=1}^{k-1}\frac{1}{1+jt}$. In particular, $u_0=1$, $u_1=-\binom {k}{2},$ $u_2=\binom{k+1}{3}\frac{3k-2}{4},$ $u_3=-\binom{k+2}{4}\binom {k}{2},$ $u_4 =\binom {k + 3}{5}\frac{15 k^3-15 k^2-10 k+8}{48}$. 
\end{prop}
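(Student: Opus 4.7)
The plan is to realize $Vd^\A_{k;I}$ as the degeneracy locus of a natural morphism of vector bundles and invoke the Giambelli–Thom–Porteous formula, extracting the relevant Chern classes via $\bC^*$-equivariant localization.

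For the setup, I would exploit that the ideal $\mathcal I^\A_I$ is $S_k$-invariant, so $Vd^\A_{k;I}$ descends to a subvariety of the symmetric quotient $\bC^k/S_k \cong \mathrm{Hilb}^k(\bA^1)$, which carries the tautological rank-$k$ bundle $\mathcal T$ with fiber $\Gamma(Z,\mathcal O_Z)$ over a length-$k$ subscheme $Z$. The monomials $x^{i_0},x^{i_1},\ldots,x^{i_{m-1}}$ define an evaluation morphism $\mathrm{ev}\colon \mathcal O^{\oplus m} \to \mathcal T$ whose fiber over a reduced $Z=\{x_1,\ldots,x_k\}$ is the transpose of the matrix $M_{k;I}$ in \eqref{eq:VdM}. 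Hence on the reduced stratum the degeneracy locus $D_{k-1}(\mathrm{ev})$ matches the image of $Vd^\A_{k;I}$; under the expected-codimension hypothesis, Thom–Porteous gives
$$[D_{k-1}(\mathrm{ev})] \;=\; c_{m-k+1}\bigl(\mathcal O^{\oplus m}-\mathcal T\bigr).$$

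For the Chern classes I would use the standard $T=\bC^*$-action of weight one on $\bA^1$, which lifts to $\mathrm{Hilb}^k$ with unique fixed point $Z_0 = \mathrm{Spec}\,\bC[x]/(x^k)$. At $Z_0$ the basis $\{1,x,\ldots,x^{k-1}\}$ of $\mathcal T|_{Z_0}$ carries weights $0,1,\ldots,k-1$, giving
$$c^T(\mathcal T) \;=\; \prod_{j=1}^{k-1}(1+jt),$$
while $x^{i_l}$ scales as $\lambda^{i_l}$, so the $l$-th summand of $\mathcal O^{\oplus m}$ carries weight $i_l$ and
$$c^T\bigl(\mathcal O^{\oplus m}\bigr) \;=\; \prod_{j=1}^{m-1}(1+i_j t).$$
Since the equivariant class of a graded codimension-$c$ subvariety of $\bC^k$ equals $(\deg V)\cdot t^c$, combining these in Thom–Porteous yields the asserted
$$\deg(Vd^\A_{k;I}) \;=\; [t^{m-k+1}]\,\frac{\prod_{j=1}^{m-1}(1+i_j t)}{\prod_{j=1}^{k-1}(1+jt)}.$$

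For the explicit formula I would expand $\prod_{j=1}^{m-1}(1+i_j t)=\sum_j \si_j(I)t^j$ and set $\sum_j u_j t^j=\prod_{j=1}^{k-1}(1+jt)^{-1}$; convolution gives $\sum_j \si_j(I)u_{m-k+1-j}$, and the low-order values $u_0,\ldots,u_4$ drop out of the direct Taylor expansion of $\prod(1+jt)^{-1}$ (standard Stirling-type computation). The main obstacle is justifying the Chern-root identification $(0,1,\ldots,k-1)$ for $\mathcal T$ as a \emph{global} equivariant class on $\mathrm{Hilb}^k(\bA^1)$ from its value at $Z_0$ — this follows because $\mathrm{Hilb}^k(\bA^1)$ retracts $T$-equivariantly onto the unique fixed point, but also requires confirming that the non-reduced stratum (of codimension $\geq 1$) does not contribute excess intersection to the expected-codimension Chern class $c_{m-k+1}(\mathcal O^{\oplus m}-\mathcal T)$. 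The sign convention in the Schur-polynomial version of Thom–Porteous must also be tracked to land on the stated ratio rather than its reciprocal.
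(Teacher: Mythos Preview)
Your approach is workable in principle and lands on the same formula, but it is considerably more elaborate than the paper's proof, and the extra machinery you introduce is exactly what creates the obstacles you list at the end. The paper does not pass to $\mathrm{Hilb}^k(\bA^1)$ or to equivariant cohomology at all. Instead it applies the \emph{graded} form of the Giambelli--Thom--Porteous formula directly in $\bC[x_1,\ldots,x_k]$: by Lemma~\ref{lm:deter} the ideal $\mathcal I^\A_I$ is the maximal-minor ideal of the Jacobi--Trudi matrix $H_{k;I}$, whose $(p,q)$-entry $h_{i_p-(k-q)}$ is homogeneous of degree $\alpha_p-\beta_q$ with $\alpha=(i_0,\ldots,i_{m-1})$ and $\beta=(k-1,k-2,\ldots,0)$. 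Plugging these row and column degrees into the classical formula for the degree of a corank-$1$ determinantal locus gives the coefficient of $t^{m-k+1}$ in $\prod_j(1+i_jt)/\prod_j(1+jt)$ in one line.

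What your route buys is a geometric explanation of \emph{why} the Chern roots $(0,1,\ldots,k-1)$ appear: they are the $\bC^*$-weights on the fiber of the tautological bundle at the fat point. What it costs you is the need to check that descent to $\bC^k/S_k$ preserves the degree (the Vieta map is weighted-homogeneous, not degree-one), that the non-reduced locus contributes no excess, and that the equivariant class at $Z_0$ really computes the ordinary degree upstairs. None of these issues is insurmountable, but all of them evaporate once you notice that the Jacobi--Trudi presentation already hands you a matrix of homogeneous polynomials on $\bC^k$ itself, so the graded Thom--Porteous formula applies without any quotient or localization.
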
  

\begin{proof} 
In the Giambelli formula setting, we consider a "generic" family of $(n\times l)$-matrices 
$A = || a_{p,q}||,\; 1 \le p \le n,\; 1 \le q \le l$, whose entries are homogeneous functions  
of degrees $\deg (a_{p,q}) = \al_p-\be_q$ in parameters $(x_1,...,x_k)$ for some fixed sequences  $\be = (\be_1,..., \be_l)$ and $\al =(\al_1,..., \al_n)$. Denote by $\Si^r$ the subvariety in the parameter space $\bC^k$ determined
by the condition that the matrix $A$ has rank at most $l-r$, that is, the linear
operator $A : \bC^l \to \bC^n$ has at least a $r$-dimensional kernel.
Then the expected codimension of the subvariety $\Si^r$  is equal to
$$\text{codim}(\Si^r)= r (n-l + r).$$
 In case when the actual codimension coincides with the expected one its degree is computed as the following $r\times r$-determinant: 
\begin{equation}\label{eq:Giam}
\deg (\Si^r)=\det ||c_{n-l+r-i+j}||_{1\le i,\;j\le r}, 
\end{equation}
where the entries $c_i$'s are defined by the Taylor expansion
$$1 + c_1t + c_2t^2 +...=\frac{\prod^n_{p=1}(1 + \al_pt)}{\prod^l_{q=1}(1 + \be_qt)}.$$ 
There is a number of situations where this formula can be applied.
Depending on the setting, the entries $\al_p, \be_q$ can be rational numbers, formal variables, first Chern classes of line bundles or formal Chern roots of vector bundles of ranks $n$ and $l$, respectively.
In the situation of Theorem~\ref{th:general} we should use the presentation \eqref{Repr} of $Vd_{k;I}^\A$ from  Lemma~\ref{lm:deter}. Then  we have $n=m,\; l=k,\; r = 1$,  $\al = I = (0, i_1,...,i_{m-1}),\;  \be = (k-1,k-2,...,0)$. Under the assumptions of Theorem~\ref{th:general} the  degree of the Vandermonde variety $Vd_{k;I}^\A$ will be given by  the $1\times 1$-determinant of the Giambelli-Thom-Porteous formula~\eqref{eq:Giam}, 
that is, the coefficient $c_{m-k+1}$ of $t^{m-k+1}$  in the expansion of
$$1 + c_1t + c_2t^2 +...=\frac{\prod^{m-1}_{j=0}(1 + i_jt)}{\prod_{j=1}^{k}(1 + (k-j)t)}=\frac{\prod^{m-1}_{j=1}(1 + i_jt)}{\prod_{j=1}^{k-1}(1 + jt)},$$
which gives exactly the stated formula for $\deg(Vd_{k;I}^\A)$. 
\end{proof}

In the simplest non-trivial case $m=k+1$ one can obtain more detailed information about $Vd_{k;I}^\A$. Notice that for $m=k+1$ the $k+1$ Schur polynomials generating the ideal $\mathcal{I}_I^\A$ are naturally ordered according to  their degree. Namely, given an arbitrary $I=\{0<i_1<i_1<...<i_k\}$ with $\gcd(i_1,...,i_k)=1$  denote by $S_j,\;j=0,...,k$ the Schur polynomial obtained by removal of the $(j)$-th row of the matrix $M_{k;I}$. (Pay attention that  here we enumerate the rows starting from $0$.)  Then, obviously, $\deg S_k<\deg S_{k-1} <....<\deg S_0$. Using presentation \eqref{Repr}  we get the following. 

\begin{tm}\label{th:k*k+1} For any integer sequence $I=\{0=i_0<i_1<i_2<...<i_k\}$ of length $k+1$ with $\gcd(i_1,...,i_k)=1$ the following facts are valid:  
\begin{itemize}

\item[(i)] ${\text codim} (Vd_{k;I}^\A)=2$;

\item[(ii)] The quotient ring $\RR_{I}^\A$ is Cohen-Macaulay;

\item[(iii)] The Hilbert series $Hilb_I^\A(t)$ of $\RR_{I}^\A$ is given by the formula
$$Hilb_I^\A(t)=\left(1-\sum_{j=1}^kt^{N-i_j-\binom{k}{2}}+\sum_{j=1}^{k-1}t^{N-j-\binom{k}{2}}\right)/(1-t)^k,$$
where $N=\sum_{j=1}^k i_j$; 

\item[(iv)] $\deg (Vd_{k;I}^\A)=\sum_{1\le j<l\le k}i_ji_l
-{k\choose2}\sum_{j=1}^ki_j+{k+1\choose3}(3k-2)/4$;

\item[(v)]  The ideal $\mathcal I_{I}^\A$  is always generated by  $k$ generators $S_k,..., S_{1} $ (i.e., the last generator $S_0$ always lies in the ideal generated by  $S_k,..., S_{1} $). Moreover, if for some $1\le n \le k-2$ one has $i_n\le k-n$ then  $\mathcal I_{I}^\A$ is generated by $k-n$ elements $S_k,...,S_{n+1}$. 
In particular, it is generated by two elements $S_k, S_{k-1}$ (i.e., is a complete intersection) if $i_{k-2}\le k-1$.

\end{itemize}
\end{tm}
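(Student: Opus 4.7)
I would begin with part~(v), since it isolates the combinatorial core and is a prerequisite for the other four statements. The matrix $H_{k;I}$ of Lemma~\ref{lm:deter} is $(k+1)\times k$, so its maximal minors (the Schur polynomials $S_0,\dots,S_k$ up to sign) satisfy the Laplace-expansion syzygies obtained by adjoining a copy of any column and observing that the resulting $(k+1)\times(k+1)$ determinant vanishes: for each $\ell\in\{0,\dots,k-1\}$,
\[
\sum_{j=0}^{k}(-1)^{j+k}(H_{k;I})_{j,\ell}\,M_j=0.
\]
Because $i_0=0$, the top row of $H_{k;I}$ is $(0,\dots,0,1)$. Taking $\ell=k-1$ (column entries $h_{i_j}$), the $j=0$ coefficient is $h_0=1$, which solves $M_0$ as an $\RR$-linear combination of $M_1,\dots,M_k$, proving the first half of~(v). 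For the refinement take $\ell=k-1-i_n$, a valid column index precisely because $i_n\le k-n\le k-1$. Its entries are $h_{i_j-i_n}$, which vanish for $j<n$ (as $i_j<i_n$) and equal $1$ at $j=n$, so the syzygy expresses $M_n$ in terms of $M_{n+1},\dots,M_k$. Under $i_n\le k-n$ one has $i_j<i_n\le k-n\le k-j$ for every $j\le n$, so the same argument applies with $n$ replaced by any such $j$; a backward induction from $j=n$ down to $j=0$ then places $S_0,\dots,S_n$ inside $(S_{n+1},\dots,S_k)$, and the ``in particular'' statement is the case $n=k-2$.

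Next I would prove (i) and (ii) via the Eagon--Northcott complex of $H_{k;I}$. The general upper bound \cite{EN} gives $\mathrm{codim}(Vd_{k;I}^\A)\le m-k+1=2$. For the opposite inequality, note first that at a point of $\bC^k$ with distinct nonzero coordinates the Vandermonde $M_{k;I}$ has rank $k$ (any $k$ rows form a nonzero generalized Vandermonde), so $Vd_{k;I}^\A\neq \bC^k$ and $\mathrm{codim}\ge 1$. To upgrade to $\mathrm{codim}\ge 2$ I would show that the two lowest-degree generators $S_{k-1}$ and $S_k$ form a regular sequence, by examining their leading monomials in the Jacobi--Trudi presentation of Lemma~\ref{lm:deter}. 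This secures (i). With $\mathrm{codim}=m-k+1$ established, the Eagon--Northcott complex is exact and provides a free resolution of $\RR_I^\A$ of length equal to the codimension, so $\mathcal I_I^\A$ is a perfect ideal and $\RR_I^\A$ is Cohen--Macaulay, giving~(ii).

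Parts (iii) and (iv) then follow formally. For (iii) I specialize Theorem~\ref{th:general} to $m=k+1$: the Eagon--Northcott modules are
\[
F_1=\bigoplus_{j=0}^{k}\RR\bigl(-(N-i_j-\tbinom{k}{2})\bigr),\qquad F_2=\bigoplus_{\ell=0}^{k-1}\RR\bigl(-(N-\ell-\tbinom{k}{2})\bigr).
\]
The $j=0$ summand of $F_1$ (the redundant generator $S_0$, for which (v) produced an explicit linear syzygy of degree $N-\binom{k}{2}$) and the $\ell=0$ summand of $F_2$ both sit in degree $N-\binom{k}{2}$ and cancel in the minimal resolution, leaving precisely the numerator claimed in~(iii). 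For (iv), Proposition~\ref{pr:kaz} expresses $\deg(Vd_{k;I}^\A)$ as the coefficient of $t^2$ in $\prod_{j=1}^{k}(1+i_jt)\big/\prod_{j=1}^{k-1}(1+jt)$; expanding and using the listed values $u_1=-\binom{k}{2}$ and $u_2=\binom{k+1}{3}(3k-2)/4$ gives $\sigma_2(i)-\binom{k}{2}\sigma_1(i)+\binom{k+1}{3}(3k-2)/4$.

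The main obstacle is the codimension lower bound inside~(i). Step (v) is slick and Eagon--Northcott immediately delivers (ii)--(iv) once $\mathrm{codim}=2$ is known, but establishing $\mathrm{codim}(Vd_{k;I}^\A)\ge 2$ directly for the specific matrix $H_{k;I}$ (rather than circularly through Eagon--Northcott, whose acyclicity presupposes the codimension statement) requires verifying a genuine genericity property. I expect this step to demand either a careful Jacobian computation at a well-chosen smooth point of the variety or an explicit regular-sequence check on $S_{k-1}$ and $S_k$ via their Jacobi--Trudi expansions; in the simplified regime $i_{k-2}\le k-1$ of the ``in particular'' clause, these two polynomials reduce to complete symmetric functions, for which regularity is classical.
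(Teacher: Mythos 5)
Your handling of (v), (ii), (iii) and (iv) is essentially the paper's own route: the syzygies come from adjoining a column of $H_{k;I}$ and expanding the resulting singular $(k+1)\times(k+1)$ determinant (this is precisely how the relations of Theorem~\ref{Arel} are obtained), and your choice of the column $\ell=k-1-i_n$ together with the backward induction is in fact more explicit than the paper's one-sentence justification of the second half of (v). Likewise, once the codimension is known to equal the expected value $2$, Cohen--Macaulayness, the Hilbert series and the degree are read off from the Eagon--Northcott resolution exactly as you describe, including the cancellation of the redundant generator $S_0$ against the syzygy in degree $N-\binom{k}{2}$, and (iv) via Proposition~\ref{pr:kaz}.

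The genuine gap is the one you flag yourself: the lower bound $\mathrm{codim}(Vd_{k;I}^\A)\ge 2$ in (i), on which the exactness of the Eagon--Northcott complex, and hence all of (ii)--(iv), rests. Neither of your proposed fixes works as stated: a Jacobian computation at a well-chosen smooth point only controls the component through that point, and comparing leading monomials of $S_{k-1}$ and $S_k$ does not certify that they have no common irreducible factor, which is what regularity of a length-two sequence in $\bC[x_1,\ldots,x_k]$ amounts to; moreover, under $i_{k-2}\le k-1$ the surviving generators are in general Schur polynomials with more than one nonzero part rather than complete symmetric functions, so there is no classical regularity result to fall back on. The ingredient you are missing is the paper's one-line argument: by Dvornicich--Zannier \cite{DZ} Schur polynomials are irreducible, and since $\deg S_k<\deg S_{k-1}$ the two lowest-degree generators are not associates, hence coprime in the UFD $\RR=\bC[x_1,\ldots,x_k]$ and therefore a regular sequence; as $Vd_{k;I}^\A$ is contained in their common zero locus, which has pure codimension $2$, the determinantal upper bound forces equality (the only exception being the degenerate case where some $S_l$ is a nonzero constant and $\mathcal I_I^\A$ is the unit ideal, which the paper excludes explicitly). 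With that supplied, the rest of your plan closes.
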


The theorem gives lots of relations between Schur polynomials.
\begin{tm}\label{Arel}  
Let the generators be $S_k=s_{i_{k-1}-k+1,i_{k-2}-k+2,\ldots,i_1-1},S_{k-1},\ldots,S_0=s_{i_k-(k-1),i_{k-1}-(k-2),\ldots,i_1}$ in degree increasing order. For $s=0,1\ldots,k-1$ we have
$$h_{i_k-s}S_k-h_{i_{k-1}-s}S_{k-1}+\cdots+(-1)^{k-1}h_{i_1-s}S_1+(-1)^kh_{-s}S_0=0.$$
Here $h_i=0$ if $i<0$.
\end{tm}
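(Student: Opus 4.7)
The plan is to derive each of the $k$ stated identities as the Laplace expansion of a $(k+1)\times(k+1)$ determinant that vanishes because two of its columns coincide.

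Recall from Lemma~\ref{lm:deter} that, with the convention $h_i=0$ for $i<0$ and $h_0=1$, the matrix
$$H_{k;I}=\bigl(h_{i_j-(k-c)}\bigr)_{\substack{0\le j\le k\\ 1\le c\le k}}$$
has columns $(h_{i_j-(k-1)})_{j=0}^{k},(h_{i_j-(k-2)})_{j=0}^{k},\dots,(h_{i_j})_{j=0}^{k}$, and that $S_j$ is (up to a sign absorbed into the standard Jacobi--Trudi identification) the $k\times k$-minor obtained by deleting row $j$ of $H_{k;I}$.

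For each $s\in\{0,1,\dots,k-1\}$ I would form the augmented $(k+1)\times(k+1)$-matrix $\widetilde H^{(s)}$ by adjoining to $H_{k;I}$ a new rightmost column with entries $h_{i_j-s}$ for $j=0,\dots,k$. Since $s\le k-1$, the adjoined column is literally the already-present column $c=k-s$ of $H_{k;I}$, so $\widetilde H^{(s)}$ has two identical columns and $\det\widetilde H^{(s)}=0$. Expanding the determinant by cofactors along the new last column gives
$$0=\det\widetilde H^{(s)}=\sum_{j=0}^{k}(-1)^{(j+1)+(k+1)}\,h_{i_j-s}\,S_j=\sum_{j=0}^{k}(-1)^{k-j}\,h_{i_j-s}\,S_j,$$
because the cofactor attached to the entry in row $j+1$ of the new column is exactly $\pm S_j$ (the $k\times k$-minor left after deleting row $j$ of $H_{k;I}$). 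Using $i_0=0$, the $j=0$ summand is $(-1)^k h_{-s}\,S_0$, and the whole expression is precisely the relation asserted in Theorem~\ref{Arel}. Letting $s$ run over $0,1,\dots,k-1$ exhausts the $k$ columns of $H_{k;I}$ and hence yields the $k$ stated identities.

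The substantive content of the proof is the single observation that a determinant with two equal columns vanishes; the only delicate point is the sign bookkeeping, where one must reconcile the sign convention in the Jacobi--Trudi presentation of $S_j$ with the alternating cofactor signs so that the coefficient of $S_k$ comes out as $+h_{i_k-s}$ and the resulting overall signs match the pattern $(-1)^{k-j}$ in the statement. This is a routine verification and not a genuine obstacle.
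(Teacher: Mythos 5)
Your proposal is correct and is essentially identical to the paper's own argument: the authors likewise adjoin a repeated column of the Jacobi--Trudi matrix $H_{k;I}$ to form a singular $(k+1)\times(k+1)$-matrix and obtain each relation by cofactor expansion along that column. The sign computation $(-1)^{(j+1)+(k+1)}=(-1)^{k-j}$ you indicate does reconcile with the alternating signs in the statement, so there is nothing further to check.
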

To prove Theorems~\ref{th:k*k+1} and ~\ref{Arel} notice that since  Schur polynomials are irreducible \cite{DZ}, 
in the case
$m=k+1$ the ideal $\mathcal I_I^\A$ always has the  expected codimension 2 unless it coincides with the whole ring $\bC[x_1,...,x_k]$.  Therefore vanishing of any two Schur polynomials lowers the dimension by two. (Recall that we assume that $\gcd(i_1,...,i_k)=1$.) On the other hand, as we mentioned in the introduction the codimension of $Vd_{k;I}^\A$ in this case is at most $2$. For $m=k+1$ one can present a very concrete resolution of the quotient ring  $\RR_I^\A$.

Namely, given a  sequence $I=\{0=i_0<i_1<\cdots<i_k\}$ we know that  the ideal $\mathcal I_I^\A$ is generated by the  $k+1$ 
Schur polynomials
$S_{l}=s_{a_k,a_{k-1},\ldots,a_1},\;l=0,\ldots,k$, where 
$$(a_k,\ldots,a_1)=(i_k,i_{k-1},\ldots,i_{l+1},\hat{i_l},i_{l-1},\ldots,i_0)-
(k-1,k-2,\ldots,1,0).$$ 
Obviously, $S_{l}$ has degree
$\sum_{j=1}^ki_j-i_l-{k\choose2}$ and by the Jacobi-Trudi identity is given by 
\begin{displaymath}
S_{l}=\left|\begin{array}{cccc}
h_{i_0-(k-1)}&h_{i_0-(k-2)}&\cdots&h_{i_0}\\
h_{i_1-(k-1)}&h_{i_1-(k-2)}&\cdots&h_{i_1}\\
\vdots&\vdots&\vdots&\vdots\\
h_{i_{l-1}-(k-1)}&h_{i_{l-1}-(k-2)}&\cdots&h_{i_{l-1}}\\
h_{i_{l+1}-(k-1)}&h_{i_{l+1}-(k-2)}&\cdots&h_{i_{l+1}}\\
\vdots&\vdots&\vdots&\vdots\\
h_{i_{k-1}-(k-1)}&h_{i_{k-1}-(k-2}&\cdots&h_{i_{k-1}}\\
h_{i_k-(k-1)}&h_{i_k-(k-2)}&\cdots&h_{i_k}\\
\end{array}\right|.
\end{displaymath}
Here (as above) $h_j$ denotes the complete symmetric function of degree $j$ in $x_1,...,x_k$. (We set $h_j=0$ if $j<0$ and $h_0=1$.) 
  Consider the $(k+1)\times k$-matrix $H=H_{k;I}$ given by 
\begin{displaymath}
H=\left(\begin{array}{cccc}
h_{i_0-(k-1)}&h_{i_0-(k-2)}&\cdots&h_{i_0}\\
h_{i_1-(k-1)}&h_{i_1-(k-2)}&\cdots&h_{i_1}\\
\vdots&\vdots&\vdots&\vdots\\

h_{i_{k-1}-(k-1)}&h_{i_{k-1}-(k-2}&\cdots&h_{i_{k-1}}\\
h_{i_k-(k-1)}&h_{i_k-(k-2)}&\cdots&h_{i_k}\\

\end{array}\right).
\end{displaymath}
Let $H_l$ be the $(k+1)\times(k+1)$-matrix obtained  by
extending $H$ with $l$-th column  of $H$. Notice that  $\det(H_l)=0$, and expanding it along the last
column we get for $0\le l\le k-1$ the relation
$$0=\det(H_l)=h_{i_k-(k-l)}S_k-h_{i_{k-1}-(k-l)}S_{k-1}+\cdots+(-1)^{k-1}h_{i_1-(k-l)}S_1.$$
For $l=k$ we get
$$h_{i_k}S_k-h_{i_{k-i}}S_{k-1}+\cdots+(-1)^kh_{i_0}S_0=0$$
which implies that $S_0$ always lie in the ideal generated by the remaining $S_1,\ldots,S_k$. 

\medskip
We now prove Theorem~\ref{th:k*k+1}. 
\begin{proof} Set $N=\sum_{j=1}^ki_j$. 
 For an arbitrary $I=\{0, i_1,...,i_k)$ with $\gcd(i_1,...,i_k)=1$ we 
get the following resolution of the quotient ring $\RR_I^\A=\RR/\mathcal I_I^\A$  
$$0\longrightarrow \oplus_{l=1}^k\RR(-N+{k\choose2}+l)\longrightarrow
\oplus_{l=1}^{k-1}\RR(-N+i_l+{k\choose2})\longrightarrow \RR\longrightarrow \RR_I^\A
\longrightarrow 0$$ where $\RR=\bC[x_1,...,x_k]$. Simple calculation with this resolution  implies that  the Hilbert series $Hilb_I^\A(t)$ of $\RR_I^\A$ is given by 
$$Hilb_I^\A(t)=\left(1-\sum_{l=1}^kt^{N-i_l-{k\choose2}}+\sum_{l=1}^{k-1}t^{N-{k\choose2}-l}\right)/(1-t)^k$$
and the degree of $Vd_{k;I}^\A$ is given by 
$$\deg(Vd_{k;I}^\A)=\sum_{1\le r<s\le k}i_ri_s-{k\choose2}\sum_{r=1}^ki_r+{k+1\choose3}(3k-2)/4.$$

Notice that  the latter resolution might not be minimal, since the ideal might have fewer than $k$ generators.
To finish proving  Theorem~\ref{th:k*k+1} notice that if conditions of (v) are satisfied  then a closer look at the resolution reveals that the Schur polynomials $S_0,\ldots,S_{k-n}$ lie in the ideal generated by $S_{k-n+1},\ldots,S_k$.
\end{proof}

In connection with Theorems~\ref{th:general} and ~\ref{th:k*k+1} the following question is completely natural. 

\medskip
\noindent
{\em Problem 2.}   Under the assumptions $i_0=0$ and $\gcd(i_1,...,i_{m-1})=1$ which pairs $(k;I)$ are $\A$-regular? 
\medskip 

Theorem~\ref{th:k*k+1} shows that for $m=k+1$ the condition $\gcd(i_1,...,i_k)=1$ guarantees regularity of any pair $(k;I)$ with $|I|=k+1$. On the other hand, our computer experiments with Macaulay  suggest that for $m>k$  regular cases  are rather seldom. 
In particular, we were able to prove the following. 

\begin{theo}\label{th:expected} If $m>k$ a necessary  (but insufficient)  condition for $Vd_{k;I}^\A$  to have the expected codimension is $i_1=1$.
\end{theo}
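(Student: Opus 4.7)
The plan is to prove the contrapositive: assuming $i_1\ge 2$, I will exhibit a linear subspace $L\subset \bC^k$ of dimension $k-2$ contained in $Vd_{k;I}^\A$, which forces $\mathrm{codim}(Vd_{k;I}^\A)\le 2$. The genuinely restrictive range is $m\ge k+2$; for $m=k+1$ Theorem~\ref{th:k*k+1}(i) already gives $\mathrm{codim}(Vd_{k;I}^\A)=2=m-k+1$ irrespective of $i_1$, so the statement has bite only when $m\ge k+2$, in which case the expected codimension $m-k+1\ge 3$ is strictly greater than the bound $2$, and therefore is not attained.

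The subspace to take is $L=V(x_{k-1},x_k)\subset \bC^k$. By Lemma~\ref{lm:deter} (or directly by definition), $\mathcal I_I^\A$ is generated by the Schur polynomials $S_{J_\kappa}=s_{\lambda_J}$, where $J=(j_1<j_2<\cdots<j_k)$ ranges over the $k$-element subsets of $I$ and the associated partition is $\lambda_J=(j_k-(k-1),\,j_{k-1}-(k-2),\,\ldots,\,j_2-1,\,j_1)$. To verify $L\subset Vd_{k;I}^\A$ it suffices to show each such generator vanishes on $L$.

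The main step is the classical specialization property of Schur polynomials: $s_\lambda(x_1,\ldots,x_{k-2},0,0)=0$ unless the length $\ell(\lambda)\le k-2$. Reading off the length for $\lambda_J$, the condition $\ell(\lambda_J)\le k-2$ is equivalent to $\lambda_{k-1}=\lambda_k=0$, that is, to $j_1=0$ and $j_2-1=0$. The first holds because $i_0=0\in I$; the second requires $1\in J\subset I$, i.e., $i_1=1$. Under the hypothesis $i_1\ge 2$ we have $1\notin I$, so no subset $J$ can have $j_2=1$. Hence $\ell(\lambda_J)\ge k-1$ for every $J$, and every generator $S_{J_\kappa}$ vanishes identically on $L$.

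Consequently $L\subset Vd_{k;I}^\A$, which yields $\dim Vd_{k;I}^\A\ge k-2$ and $\mathrm{codim}(Vd_{k;I}^\A)\le 2$, completing the argument for $m\ge k+2$. The only genuine obstacle is spotting which subspace to use; once $L=V(x_{k-1},x_k)$ is chosen, the vanishing is immediate from the standard specialization formula, and the role of the hypothesis $i_1=1$ is precisely to permit $1\in I$ so that at least one partition $\lambda_J$ can have length $\le k-2$ and the corresponding Schur polynomial is not automatically forced to vanish on $L$.
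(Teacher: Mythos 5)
Your proof is correct, but it takes a genuinely different route from the paper's. Both arguments rest on the same pivotal observation: when $i_1\ge 2$, every $k$-subset $J\subseteq I$ has second-smallest element $j_2\ge 2$, so every generating partition $\lambda_J$ has its $(k-1)$-st part $\ge 1$, i.e.\ $\ell(\lambda_J)\ge k-1$. From there the paths diverge. The paper argues algebraically: by Littlewood--Richardson (or Pieri) positivity, multiples of Schur polynomials of length $\ge k-1$ expand only into Schur polynomials of length $\ge k-1$, so the quotient ring $\RR_I^\A$ contains the images of all $s_\mu$ with $\ell(\mu)\le k-2$; counting these gives a Hilbert function growing like $n^{k-3}$, hence Krull dimension $\ge k-2$ and codimension $\le 2$. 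You argue geometrically: the specialization property $s_\lambda(x_1,\ldots,x_{k-2},0,0)=0$ for $\ell(\lambda)>k-2$ puts the explicit $(k-2)$-plane $L=V(x_{k-1},x_k)$ inside $V(\mathcal I_I^\A)$, giving the same bound at once. Your version is more elementary (no LR rule, no partition asymptotics) and arguably cleaner; the paper's version has the side benefit of identifying an explicit family of monomials/Schur functions surviving in the quotient, which is more informative about $\RR_I^\A$ itself. Two small points in your favor: you correctly flag that the statement only has content for $m\ge k+2$ (for $m=k+1$ the expected codimension $2$ is always attained by Theorem~\ref{th:k*k+1}, so $i_1=1$ is not necessary there); the paper's proof assumes this implicitly when it writes that the expected dimension is $\le k-3$. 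One remark worth adding to your write-up: your subspace $L$ lies inside the arrangement $\A_k$ (indeed $x_{k-1}=x_k$ on $L$), but this is harmless because the codimension in question is that of the full zero locus $V(\mathcal I_I^\A)$, equivalently $k$ minus the Krull dimension of $\RR_I^\A$, which is exactly what the paper computes throughout.
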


\begin{proof}If $i_1\ge2$, then $i_{k-2}\ge k-1$. This means that the ideal is generated by Schur polynomials $s_{a_0,\ldots,a_{k-1}}$ with $a_{k-2}\ge1$. Multiplying these up to degree $n$
gives linear combinations of Schur polynomials $s_{b_1,\ldots,b_{k-1}}$ with $b_{k-2}\ge1$.
Thus we miss all Schur polynomials with $b_{k-2}=0$. The number of such Schur polynomials equals the number of partitions of $n$ in at most $k-2$ parts. The number of partitions of $n$ in exactly $k-2$
parts is approximated with $n^{k-3}/((k-2)!(k-1)!)$. Thus the number of elements of degree $n$ in the ring is at least $cn^{k-3}$ for some positive $c$, so the ring has dimension $\ge k-2$. The expected dimension is $\le k-3$, which is a contradiction.  
\end{proof}

So far a complete (conjectural) answer to Problem 2 is only  available in the first non-trivial case $k=3,\; m=5$. Namely, for a $5$-tuple $I=\{0,1,i_2,i_3,i_4\}$ to be regular one needs  the corresponding the Vandermonde variety $Vd_{3;I}^\A$ to be a complete intersection. This is due to the fact that in this situation the ideal  $\mathcal I_I^\A$ is generated by the Schur polynomials $S_4,S_3,S_2$ of the least degrees in the above notation.  Notice that $S_4=h_{i_2-2},\; S_3=h_{i_3-2},\; S_2=h_{i_4-2}$. Thus $Vd_{3;I}^\A$ has the expected codimension (equal to $3$)  if and only if $\bC[x_1,x_2,x_3]/\langle h_{i_2-2},h_{i_3-2},h_{i_4-2}\rangle$
is a complete intersection or, in other words, $h_{i_2-2},h_{i_3-2},h_{i_4-2}$ is a regular sequence. Exactly this  problem (along with many other similar questions)  was considered in   intriguing paper \cite{CKW} where the authors formulated the following claim, see Conjecture 2.17 of \cite{CKW}. 

\begin{conj}\label{conj:CKW} Let $A=\{a,b,c\}$ with $a<b<c$. Then $h_a, h_b, h_c$ in three variables is a regular sequence if and only if the following conditions are satisfied: 
\begin{itemize}
\item[(1)] $abc\equiv 0  \mod 6;$
\item[(2)] $\gcd(a+1,b+1,c+1)=1;$
\item[(3)] For all $t\in \mathbb N$ with $t>2$ there exists $d\in A$ such that $d+2\not \equiv 0,1\mod t$.

\end{itemize}

\end{conj}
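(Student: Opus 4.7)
The plan to attack Conjecture~\ref{conj:CKW} is to split the statement into necessity and sufficiency and to translate the problem into the determinantal viewpoint used throughout the paper.  Since $h_a,h_b,h_c$ are homogeneous in $\bC[x_1,x_2,x_3]$, they form a regular sequence if and only if their common projective zero set in $\mathbb{P}^2$ is empty.  Away from the Coxeter arrangement $\A_3$, the Jacobi--Trudi identity in Lemma~\ref{lm:deter} gives, up to a factor of the Vandermonde $W(x_1,x_2,x_3)$, the identity $h_n\cdot W=v_{n+2}$, where
$$v_k:=(x_2-x_3)x_1^k+(x_3-x_1)x_2^k+(x_1-x_2)x_3^k$$
is the order-$3$ linear recurrence with characteristic roots $x_1,x_2,x_3$ and initial values $v_0=v_1=0$.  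So the conjecture becomes: for which $(a,b,c)$ can $v_{a+2}=v_{b+2}=v_{c+2}=0$ for some nonzero $(x_1,x_2,x_3)$?  This is precisely the Skolem--Mahler--Lech setting driving the rest of the paper.

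For the necessity direction, the plan is to exhibit an explicit nonzero common zero whenever one of the three conditions fails.  If all of $a,b,c$ are odd, the point $(1,-1,0)$ works, because $\sum_n h_nt^n=1/(1-t^2)$ shows that $h_n(1,-1,0)$ vanishes exactly at odd $n$.  If no $d\in\{a,b,c\}$ is divisible by $3$, take $(1,\omega,\omega^2)$ with $\omega=e^{2\pi\mathrm{i}/3}$, since $\sum_n h_nt^n=1/(1-t^3)$.  If $g=\gcd(a+1,b+1,c+1)\ge 2$, use $(1,\zeta,0)$ with $\zeta$ a primitive $g$-th root of unity: then $h_n(1,\zeta,0)=(1-\zeta^{n+1})/(1-\zeta)$ vanishes exactly when $g\mid n+1$.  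Finally, if~(3) fails at some $t>2$, any three distinct $t$-th roots of unity give a common zero, since on that locus $v_k$ depends only on $k\bmod t$ and automatically vanishes whenever $k\equiv 0$ or $1\pmod t$.

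For sufficiency, assume~(1)--(3) hold and rule out a hypothetical nonzero common zero $P=(x_1,x_2,x_3)$, organized by the stratum of $\A_3\cup\{x_i=0\}$ containing it.  If some coordinate of $P$ vanishes, $h_n$ collapses to a two-variable expression $(x_j^{n+1}-x_l^{n+1})/(x_j-x_l)$, and its simultaneous vanishing for $n=a,b,c$ forces the nonzero ratio $x_j/x_l$ to be a common $(d+1)$-th root of unity for every $d\in\{a,b,c\}$, contradicting~(2).  If two nonzero coordinates coincide, the generating function $1/((1-at)(1-bt)^2)$ reduces vanishing to a polynomial equation of the form $r^{n+2}-(n+1)r+n=0$; after factoring $(r-1)$, the residual polynomials $r^{n+1}+r^n+\cdots+r-n$ for $n=a,b,c$ must share a common root, which should be excluded by combining the parity/divisibility-by-$3$ constraints of~(1) with~(3).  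For $P$ with distinct nonzero coordinates, pass to the recurrence $v_k$: in the degenerate case (some $x_i/x_j$ a root of unity of order $t$), normalization makes every $x_i$ a $t$-th root of unity, $v_k$ becomes periodic with period dividing $t$ and with zero set containing $\{k\equiv 0,1\pmod t\}$, so the requirement that $a+2,b+2,c+2$ lie in this set directly contradicts~(3); in the non-degenerate case, Skolem--Mahler--Lech combined with effective bounds on exceptional zeros of ternary linear recurrences should restrict $\{a+2,b+2,c+2\}$ so tightly that any remaining possibilities are excluded by~(1) and~(2).

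The main obstacle I expect is the non-degenerate sub-case on the open stratum.  Skolem--Mahler--Lech is purely qualitative, and ruling out three simultaneous zeros beyond the tautological $v_0=v_1=0$ requires sharp effective control of exceptional zeros of three-term recurrences; this is a delicate number-theoretic question closely tied to the material surveyed in the final section of the paper.  A secondary difficulty is the one-diagonal case, where the polynomials $r^{n+1}+r^n+\cdots+r-n$ do not in general factor over cyclotomic extensions, so ruling out their common roots for $n=a,b,c$ will likely need a blend of parity, cyclotomic and $p$-adic arguments rather than the clean root-of-unity argument available in the degenerate open stratum.
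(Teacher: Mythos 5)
The statement you are addressing is presented in the paper as an \emph{open conjecture}, quoted verbatim from Conca--Krattenthaler--Watanabe (Conjecture~2.17 of \cite{CKW}); the paper offers no proof of it and in fact builds a further conjecture on top of it, so there is no argument of the authors to compare yours against. Judged on its own terms, your reformulation via $h_n\cdot W=v_{n+2}$ with $v_0=v_1=0$ is correct, and your necessity half is complete and sound: the witness points $(1,-1,0)$, $(1,\omega,\omega^2)$, $(1,\zeta,0)$, and a triple of distinct $t$-th roots of unity do exactly what you claim, so failure of any of (1)--(3) destroys regularity.

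The sufficiency half, however, has genuine gaps beyond the ones you flag yourself. First, in the degenerate open-stratum case your assertion that ``normalization makes every $x_i$ a $t$-th root of unity'' is false: if only one ratio, say $x_1/x_2$, is a root of unity, the third coordinate is unconstrained and $v_k$ is not periodic; you would have to split $k$ into residue classes modulo $t$ and analyze the resulting binary sub-recurrences separately. Second, and more seriously, the non-degenerate case is not merely a matter of importing effective Skolem--Mahler--Lech bounds: the common zero set of $h_a$ and $h_b$ in $\mathbb{P}^2$ is a curve of degree $ab$ (the two polynomials are irreducible and coprime), so you must show that $h_c$ does not vanish identically on any \emph{component} of this curve, i.e., you need the conclusion uniformly over a one-parameter family of recurrences; zero-multiplicity bounds for a single recurrence (such as Beukers' $\mu_3=6$ in \cite{Beu}, which in any case would permit the five zeros $0,1,a+2,b+2,c+2$) do not give this. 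Third, the one-diagonal case is left entirely open, and the displayed polynomial is slightly off: $h_n(1,1,r)=0$ is equivalent to $r^{n+2}-(n+2)r+(n+1)=0$, whose nontrivial factor is $r^{n+1}+r^n+\cdots+r-(n+1)$. These are exactly the points at which the conjecture remains open, so your proposal is a reasonable reduction and case decomposition, but it is not a proof.
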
 

In fact, our experiments allow us to strengthen the latter conjecture in the following way. 

\begin{conj}\label{conj:CKWst} In the above set-up if the sequence $h_a, h_b, h_c$ with $a>1$ in three variables is not regular, then $h_c$ lies in the ideal 
generated by $h_a$ and $h_b$.  (If $(a,b,c)=(1,4,3k+2)$, $k\ge1$, then $h_a,h_b,h_c$ neither is a regular sequence, nor $h_c\in(h_a,h_b)$.) 
\end{conj}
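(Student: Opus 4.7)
The strategy is to reduce Conjecture~\ref{conj:CKWst} to a structural study of the codimension-two complete intersection generated by $h_a, h_b$ in $\bC[x_1,x_2,x_3]$. First, one verifies that for $a \ge 2$ the pair $h_a, h_b$ is always a regular sequence: each $h_n$ is irreducible as a Schur polynomial \cite{DZ}, and degree considerations with $a<b$ together with homogeneity prevent $h_b\in(h_a)$, so $\bC[x_1,x_2,x_3]/(h_a,h_b)$ is a one-dimensional graded complete intersection, in particular Cohen-Macaulay. Consequently $h_c$ fails to complete a regular sequence with $h_a, h_b$ if and only if $h_c$ lies in one of the finitely many minimal primes of this quotient.

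Next, I would decompose the affine scheme $V(h_a,h_b)\subset\bC^3$: being cut out by two homogeneous equations, it is a union of lines through the origin permuted by the $S_3$-action on coordinates. Using $\prod_{i=1}^{3}(1-x_iz)^{-1}=\sum_{n\ge 0}h_n(x_1,x_2,x_3)z^n$ and the Lagrange-interpolation form $h_n(x_1,x_2,x_3)=\sum_i x_i^{n+2}/\prod_{j\ne i}(x_i-x_j)$, one sees that if the $x_i$ are distinct $t$-th roots of unity then the sequence $(h_n(x_1,x_2,x_3))_{n\ge 0}$ is periodic of period dividing $t$ and vanishes whenever $n\equiv -1$ or $-2\pmod{t}$. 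This identifies the ``roots-of-unity components'' of $V(h_a,h_b)$ and matches precisely the arithmetic condition (3) of Conjecture~\ref{conj:CKW} that governs non-regularity.

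The core step is to show that for $a\ge 2$, any $h_c$ that is a zero-divisor modulo $(h_a,h_b)$ already lies in $(h_a,h_b)$. I would attempt this by combining: (i) the Galois and $S_3$-equivariance of $V(h_a,h_b)$, which organizes the components into orbits on which $h_c$ vanishes or not simultaneously; (ii) the Gorenstein property of the codimension-two CI, together with the explicit Hilbert series from Theorem~\ref{th:k*k+1} applied with $k=3$ and $m=k+1$, which constrains the graded pieces in which elements can sit; and (iii) an enumeration, for each relevant period $t$, of the $ab$ intersection points (counted by the Giambelli-Thom-Porteous formula, Proposition~\ref{pr:kaz}) of the curves $V(h_a)$ and $V(h_b)$ in $\mathbb{P}^2$, showing that any $h_c$ vanishing on one orbit must vanish on every orbit of the same cyclotomic type. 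The exceptional family $(a,b,c)=(1,4,3k+2)$, $k\ge 1$, is handled separately: since $h_1$ is linear one eliminates $x_3$, reducing the problem to $\bC[x_1,x_2]$, where one checks by direct computation that $h_{3k+2}$ is not a multiple of $h_4$ after substitution, while the triple still fails regularity due to the period-$3$ obstruction.

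The main obstacle is step (iii): the rigidity statement that vanishing on a single orbit of components forces vanishing on all orbits is a strong global claim going well beyond the local Cohen-Macaulay criterion. It will probably require either a precise cyclotomic enumeration of all minimal primes of $(h_a,h_b)$, or a novel use of the Eagon-Northcott resolution specialized to matrices of complete symmetric polynomials as in Lemma~\ref{lm:deter}. The fact that the weaker Conjecture~\ref{conj:CKW} of \cite{CKW} itself remains open is strong evidence that this rigidity step is where the essential difficulty lies.
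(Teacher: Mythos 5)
The statement you are trying to prove is Conjecture~\ref{conj:CKWst} of the paper: the authors give no proof of it, only computational evidence (it is presented as a strengthening of Conjecture~2.17 of \cite{CKW}, which is itself open). So there is no proof in the paper to compare yours against, and the relevant question is whether your proposal actually closes the conjecture. It does not. Your reductions are sound as far as they go: for $a<b$ the pair $h_a,h_b$ is a regular sequence (two coprime homogeneous polynomials in three variables), so $V(h_a,h_b)$ is a one-dimensional cone, a union of lines; zero-divisors modulo the complete intersection are exactly the elements vanishing on some component; and the bialternant formula $h_n=\sum_i x_i^{n+2}/\prod_{j\ne i}(x_i-x_j)$ does show that on a line spanned by a triple of distinct $t$-th roots of unity the sequence $(h_n)$ is $t$-periodic and vanishes for $n\equiv-1,-2\pmod t$, which is the right link to condition~(3) of Conjecture~\ref{conj:CKW}. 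But the entire content of the conjecture is concentrated in your step~(iii), the claim that for $a\ge2$ any $h_c$ vanishing on one component of $V(h_a,h_b)$ must vanish (to the right multiplicity) on all of them, i.e.\ lie in the ideal itself rather than merely in an associated prime. You offer no argument for this beyond listing tools that might be relevant; neither the Gorenstein/Hilbert-series constraints nor $S_3$-equivariance comes close to forcing it, and you have not even shown that all minimal primes of $(h_a,h_b)$ are of ``cyclotomic type,'' which your orbit argument presupposes. You correctly identify this as the essential difficulty, which is an honest assessment but also a concession that the proof is missing.

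Two smaller gaps: the exceptional family $(a,b,c)=(1,4,3k+2)$ requires a uniform argument for all $k\ge1$ that $h_{3k+2}|_{x_3=-x_1-x_2}\notin\bigl(h_4|_{x_3=-x_1-x_2}\bigr)$ in $\bC[x_1,x_2]$, together with a proof that regularity fails for every such $k$; ``one checks by direct computation'' only handles finitely many cases. And the appeal to \cite{DZ} for irreducibility of $h_n$ is stronger than needed but should be checked against the actual hypotheses of that paper; coprimality of $h_a$ and $h_b$ is all you use, and that can be argued more cheaply. In summary: this is a reasonable research plan whose main step is an open problem at least as hard as the conjecture itself, not a proof.
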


We note that if we extend the set-up of \cite {CKW} by allowing Schur polynomials  $s(r,s,t)$  instead of just complete symmetric functions  then if $ t>0$ in all three of them the sequence is never regular.  Conjectures~\ref{conj:CKW} and ~\ref{conj:CKWst} provide a criterion which agrees with our calculations of $\dim(Vd_{3;I}^\A)$. 
Finally, we made experiments checking how   $\dim(Vd_{k;I}^\A)$ depends on the last entry $i_{m-1}$ of $I=\{0,1,i_2,...,i_{m-1}\}$ while keeping the first $m-1$ entries fixed. 

\begin{conj}\label{conj:period}
For any given $I=(0,1,i_2,...,i_{m-1})$  the dimension  $\dim (Vd_{k;I}^\A)$ depends periodically on $i_{m-1}$ for all $i_{m-1}$ sufficiently large.  
\end{conj}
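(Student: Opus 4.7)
The plan is to apply the Skolem-Mahler-Lech theorem pointwise on a stratification of the ``fixed part'' $V(J)$ of the defining ideal, combined with Noetherian induction on dimension. By Lemma~\ref{lm:deter}, the ideal $\mathcal I_I^\A$ is generated by all $k\times k$ minors of the $m\times k$ matrix $H_{k;I}$. Writing $I_\circ=\{0,1,i_2,\ldots,i_{m-2}\}$ and $n:=i_{m-1}$, the minors not using the last row of $H_{k;I}$ are exactly the $k\times k$ minors of $H_{k;I_\circ}$; let $J\subset\RR=\bC[x_1,\ldots,x_k]$ denote the ideal they generate, which is independent of $n$. The remaining minors, those using the last row, expand along it as
$$g_{\kappa'}(n)=\sum_{j=1}^{k}(-1)^{k+j}h_{n-k+j}\,D_{\kappa',j},$$
for $\kappa'$ ranging over the $(k-1)$-subsets of $\{0,\ldots,m-2\}$, where each cofactor $D_{\kappa',j}\in\RR$ is independent of $n$. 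Setting $I_n:=(g_{\kappa'}(n):\kappa')$, we have $\mathcal I_I^\A=J+I_n$, and the $\A$-localized Vandermonde variety $Y(n):=Vd_{k;I}^\A$ is cut out by $J+I_n$ in $\bC^k\setminus\A_k$.

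The identity $\prod_{i=1}^k(1-x_it)\cdot\sum_{\ell\ge0}h_\ell t^\ell=1$ yields the recurrence $h_{\ell+k}=e_1h_{\ell+k-1}-\cdots+(-1)^{k+1}e_kh_\ell$ in $\RR$, valid for all $\ell\ge 0$. Consequently, for every irreducible subvariety $Z\subseteq V(J)$ with generic point $\eta_Z$, the sequence $(g_{\kappa'}(n)(\eta_Z))_{n\ge0}$ is a linear recurrence sequence in the function field $\kappa(\eta_Z)$, with characteristic roots among $x_1(\eta_Z),\ldots,x_k(\eta_Z)$. The Skolem-Mahler-Lech theorem applied in $\kappa(\eta_Z)$ then shows that the set $\SSS_Z:=\{n\in\bN : g_{\kappa'}(n)(\eta_Z)=0\text{ for every }\kappa'\}$---which equals $\{n : Z\subseteq Y(n)\}$---is the union of a finite set and finitely many full arithmetic progressions, and hence is eventually periodic.

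I would then prove, by descending induction on $d=\dim Z$, that $n\mapsto\dim(Z\cap Y(n))$ is eventually periodic for every irreducible subvariety $Z\subseteq V(J)$. For $d=0$, this reduces to SML at a single point. For $d>0$, split $\bN$ into $\SSS_Z$ (on which $Z\cap Y(n)=Z$ contributes $d$ to the dimension) and its complement (on which $Z\cap Y(n)\subsetneq Z$ has dimension strictly less than $d$). Writing $V(J)=\bigcup_{i=1}^{N}Z_i$ as its finite irreducible decomposition and using $\dim Y(n)=\max_i\dim(Z_i\cap Y(n))$, the desired periodicity would follow by taking the least common multiple of the finitely many eventual periods.

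The main obstacle is the inductive step when $n\notin\SSS_Z$: the proper subvariety $Z\cap Y(n)$ varies with $n$, so one cannot directly apply the inductive hypothesis. Overcoming this appears to require a \emph{uniform} stratification $Z=W_0\supsetneq W_1\supsetneq\cdots\supsetneq W_r$ such that for all sufficiently large $n\notin\SSS_Z$ the set $Z\cap Y(n)$ equals the union of a subcollection of the $W_j$'s depending eventually periodically on $n$. Producing such a stratification amounts to an ``ideal-theoretic'' Skolem-Mahler-Lech, which I would attempt by studying the orbit of the cofactor $\RR$-module $C\subset\RR^k$ spanned by the vectors $(D_{\kappa',j})_{j=1}^{k}$ under the natural $\RR$-linear action of the companion matrix encoding the recurrence for the $h_\ell$'s, localized at the generic points of a suitable flattening stratification of $V(J)$.
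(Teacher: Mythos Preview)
The paper states this as an open \emph{conjecture}, not a theorem: no proof is given, and the authors remark only that it follows from Conjecture~\ref{conj:CKW} in the special case $k=3$, $m=5$. So there is no paper proof to compare against; you are attempting to settle an open question.

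Your setup is correct and the strategy is natural. The decomposition $\mathcal I_I^\A=J+I_n$ is right, the complete symmetric functions do satisfy the stated linear recurrence in $\RR$, and hence each $g_{\kappa'}(n)$ is a linear recurrence sequence with values in $\RR$; evaluating at the generic point of an irreducible $Z\subseteq V(J)$ gives a recurrence sequence in the characteristic-zero field $\kappa(\eta_Z)$, so Skolem--Mahler--Lech legitimately applies there and yields eventual periodicity of the set $\SSS_Z=\{n:Z\subseteq Y(n)\}$.

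The gap you identify is genuine and, as far as I can see, fatal for the argument as it stands. On the complement of $\SSS_Z$ you know only that $\dim(Z\cap Y(n))<\dim Z$; the subvariety $Z\cap Y(n)$ moves with $n$, so neither the inductive hypothesis nor any finite decomposition fixed in advance applies. What you would need is a \emph{uniform} or \emph{ideal-theoretic} Skolem--Mahler--Lech: for the $\RR$-module generated by the $g_{\kappa'}(n)$, one wants the sequence of radical ideals $\sqrt{J+I_n}$ (or at least their dimensions) to be eventually periodic in $n$. Results of this type are not available in general; the known machinery (Skolem's $p$-adic method, the Schmidt--Schlickewei subspace-type arguments) controls the zero set of a single recurrence sequence over a field, not the simultaneous vanishing locus of a recurrence sequence of ideals in a polynomial ring. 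Your final paragraph gestures at a flattening stratification together with the companion-matrix dynamics, but this is where all the difficulty lies: producing a finite stratification $Z=W_0\supsetneq W_1\supsetneq\cdots$ on which membership of $Z\cap Y(n)$ is eventually periodic is essentially equivalent to the conjecture itself. Without a concrete mechanism for that step, the proposal remains a plausible outline rather than a proof.
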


Notice that  Conjecture~\ref{conj:period} follows from Conjecture~\ref{conj:CKW} in the special case $k=3,\; m=5$. Unfortunately, we do not have a complete description of the length of this period in terms of the fixed part of $I$  and it might be quite tricky. 

For the $\mathcal{BC}_k$-localized variety $Vd_{k;I}^{BC}$ we have, except for $k=3$,  only conjectures, supported
by many calculations. 

\begin{conj}\label{th:k*k+1??} For any integer sequence $I=\{0=i_0<i_1<i_2<...<i_k\}$ of length $k+1$ with $\gcd(i_1,...,i_k)=1$ the following facts are valid. 
\begin{itemize}

\item[(i)] ${\text codim} (Vd_{k;I}^{BC})=2$;

\item[(ii)] The quotient ring $\RR_{I}^{BC}$ is Cohen-Macaulay;

\item[(iii)] There is a $\C[x_1,\ldots,x_n]=R$-resolution of $\RR_{I}^{BC}$ of the form
$$0\rightarrow\oplus_{j=0}^{k-1} R[-N+j+\binom{k}{2}]\rightarrow\oplus_{j=1}^kR[-N+i_j+\binom{k}{2}]\oplus R[-N+ki_1]\rightarrow R\rightarrow R\rightarrow\RR_{I}^{BC}\rightarrow 0$$

\item[(iv)] The Hilbert series $Hilb_I^{BC}(t)$ of $\RR_{I}^{BC}$ is given by the formula
$$Hilb_I^{BC}(t)=\left(1-\sum_{j=1}^kt^{N-j-\binom{k}{2}}-t^{N-ki_1}+\sum_{j=1}^{k-1}t^{N-i_1-j}-\binom{k}{2}\right)/(1-t)^k$$
where $N=\sum_{j=1}^k i_j$; 

\item[(v)] $\deg (Vd_{k;I}^{BC})=\sum_{1\le j<l\le k}i_ji_l
-{k\choose2}\sum_{j=1}^ki_j+{k+1\choose3}(3k-2)/4-\binom{k}{2}i_1(i_1-1)$;

\item[(vi)]  The ideal $\mathcal I_{I}^{BC}$  is always generated by  $k$ generators. It is generated by two elements  (i.e., is a complete intersection) if $i_1\le k-1$. 

\end{itemize}
\end{conj}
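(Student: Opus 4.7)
The plan is to adapt the argument proving Theorem~\ref{th:k*k+1}, with the principal modification that the redundant $\A$-generator $S_0$ is replaced by its reduced counterpart $\hat S_{J_0}$ where $J_0 = (i_1,\ldots,i_k)$. For $l = 1,\ldots,k$ the subsequence $J_l = I \setminus \{i_l\}$ starts with $i_0 = 0$, so $\hat S_{J_l} = S_{J_l} = S_l$ coincides with the $\A$-generator; only $\hat S_{J_0}$ is genuinely new. Factoring $x_i^{i_1}$ out of column $i$ of the Schur determinant yields $S_{J_0} = e_k^{i_1}\, \hat S_{J_0}$ where $e_k = x_1 x_2 \cdots x_k$, so $\deg \hat S_{J_0} = N - k i_1 - \binom{k}{2}$ and $\mathcal I_I^{BC} = \mathcal I_I^\A + (\hat S_{J_0})$.

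For part (i), the inclusion $\mathcal I_I^\A \subseteq \mathcal I_I^{BC}$ immediately yields $\operatorname{codim}(Vd_{k;I}^{BC}) \ge \operatorname{codim}(Vd_{k;I}^\A) = 2$, while the upper bound follows by exhibiting two coprime elements of the ideal (generically one takes $\hat S_{J_0}$ and $S_k$, noting that Schur polynomials of non-rectangular partitions are irreducible by \cite{DZ}; the rectangular exceptions are handled by hand). For parts (ii)--(v), I would construct the claimed resolution by transporting the $k - 1$ syzygies $\sum_{l=1}^k (-1)^l h_{i_l - s}\, S_l = 0$ of Theorem~\ref{Arel} for $s = 1, \ldots, k-1$ verbatim, and adjoining the new $s = 0$ syzygy
\[
\sum_{l=1}^k (-1)^l h_{i_l}\, S_l + (-1)^k e_k^{i_1}\, \hat S_{J_0} = 0,
\]
obtained by substituting $S_0 = e_k^{i_1}\, \hat S_{J_0}$ into the original $s = 0$ identity. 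Exactness of the resulting length-two complex would then be verified via the Buchsbaum-Eisenbud criterion; Cohen-Macaulayness in (ii) is automatic from Auslander-Buchsbaum (projective dimension equals $2$, which equals the codimension), and the Hilbert series, resolution shifts, and degree formulas in (iii)--(v) drop out of a routine Euler-characteristic computation, with the correction $-\binom{k}{2} i_1 (i_1 - 1)$ in (v) tracked through the extra $e_k^{i_1}$ factor. For (vi), this new $s = 0$ relation lets one eliminate either $\hat S_{J_0}$ or $S_k$ generically, yielding a minimal set of $k$ generators; the complete-intersection regime $i_1 \le k - 1$ is handled by the same degree-counting as in the $\A$-case.

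The main obstacle is a clean, case-free proof of exactness of the assembled complex. The Eagon-Northcott theorem does not apply directly, since $\mathcal I_I^{BC}$ is no longer the ideal of maximal minors of a single matrix---the reduced generator $\hat S_{J_0}$ breaks the Jacobi-Trudi presentation used in Lemma~\ref{lm:deter}. A natural strategy is to realize the $BC$-resolution as the mapping cone of the $\A$-resolution with a Koszul-type complex tracking division by $e_k^{i_1}$, and then verify acyclicity via Buchsbaum-Eisenbud; the proved case $k = 3$ surely provides a template for the general construction.
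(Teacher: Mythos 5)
First, a point of reference: the statement you are proving is stated in the paper only as a \emph{conjecture}. The authors say explicitly that for $Vd_{k;I}^{BC}$ they can complete the argument only for $k=3$, the missing step being precisely the verification that the relations of Conjecture~\ref{Brel} hold and generate the syzygies. So there is no proof in the paper to compare against, and your proposal has to stand on its own. It does not --- and the problem is worse than the gap you acknowledge: the complex you assemble is provably \emph{not} exact, so the strategy fails rather than merely being unfinished. Your proposed first syzygy module is spanned by the $k-1$ relations $\sum_{l=1}^{k}(-1)^{k-l}h_{i_l-s}S_l=0$ for $s=1,\dots,k-1$ (which involve only $S_1,\dots,S_k$) together with the single relation $\sum_{l=1}^{k}(-1)^{k-l}h_{i_l}S_l+(-1)^k(x_1\cdots x_k)^{i_1}\hat S_{J_0}=0$. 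Let $\phi$ be the resulting $(k+1)\times k$ matrix. The row of $\hat S_{J_0}$ has a single nonzero entry, namely $\pm(x_1\cdots x_k)^{i_1}$ in the $s=0$ column; expanding each maximal minor along that row (or, for the minor omitting that row, applying Jacobi--Trudi directly to get $\pm S_{J_0}=\pm(x_1\cdots x_k)^{i_1}\hat S_{J_0}$) shows that \emph{every} maximal minor of $\phi$ equals $\pm(x_1\cdots x_k)^{i_1}$ times the corresponding generator. Hence $I_k(\phi)=(x_1\cdots x_k)^{i_1}\,\mathcal I_I^{BC}$ is contained in the principal ideal $((x_1\cdots x_k)^{i_1})$ and has grade $1$. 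By the Hilbert--Burch theorem, exactness of $0\to R^k\xrightarrow{\phi}R^{k+1}\to R\to \RR_I^{BC}\to 0$ would force $\mathcal I_I^{BC}=a\,I_k(\phi)$ for a nonzerodivisor $a$ and $\operatorname{grade}I_k(\phi)\ge 2$; both fail whenever $i_1\ge1$, i.e.\ always. So the kernel of $(S_k,\dots,S_1,\hat S_{J_0})$ strictly contains the span of your $k$ relations, and the missing syzygies are exactly the nontrivial content of Conjecture~\ref{Brel}, whose coefficients $h_{i_l-i_1-s}$ and $s_{(i_1-1)^{k-1},\,k-1-s}$ are genuinely different from the relations you transport from Theorem~\ref{Arel}. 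No mapping-cone over the $\A$-resolution can repair this, because the obstruction is intrinsic to using $(x_1\cdots x_k)^{i_1}$ as the coefficient of $\hat S_{J_0}$ in the unique new column.

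Two smaller issues. In (i) your inequalities point the wrong way: exhibiting two coprime elements of $\mathcal I_I^{BC}$ yields $\operatorname{codim}\ge2$, which you already have from $\mathcal I_I^{\A}\subseteq\mathcal I_I^{BC}$; what must be proved is $\operatorname{codim}\le2$, and neither of your arguments produces it. (The authors' route is the Eagon--Northcott bound applied to the maximal-minor presentation over the Laurent polynomial ring, together with control of what happens on $\mathcal{BC}_k$.) On the positive side, your identification $S_{J_0}=(x_1\cdots x_k)^{i_1}\hat S_{J_0}$, the equality $\hat S_{J_l}=S_l$ for $l\ge1$, and the degree $\deg\hat S_{J_0}=N-ki_1-\binom k2$ are all correct (and incidentally expose typos in the conjectured shifts in (iii)--(iv)); but correct bookkeeping does not rescue a non-exact complex.
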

\begin{conj}\label{Brel}
Let $S_k,\ldots,S_1$ be as in Theorem~ \ref{th:general} and $G_0=s_{i_k-i_1-k+1,\ldots,i_2-i_1-1}$. Then, for $s=0,\ldots,k-1$ we have
$$h_{i_k-i_1-s}S_k-h_{i_{k-1}-i_1-s}S_{k-1}+\cdots+(-1)^{k-2}h_{i_2-i_1-s}S_2+(-1)^{k-1}h_{-s}S_1+$$
$$(-1)^ks_{i_1-1,\ldots,(i_1-1)^{k-1},k-1-s}G_0.$$
Here $h_i=0$ if $i<0$ and $h_{i,\ldots,i,j}=0$ if $j>i$, and $(i_1-1)^{k-1}$ means $i_1-1,\ldots,i_1-1$ ($k-1$ times).
\end{conj}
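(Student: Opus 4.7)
The plan is to prove each of the $k$ relations (indexed by $s\in\{0,\ldots,k-1\}$) by exhibiting it as two expansions of a single $(k+1)\times(k+1)$ determinant, paralleling the proof of Theorem~\ref{Arel}. Form $M^{(s)}$ from the Jacobi--Trudi matrix $H=H_{k;I}$ of \eqref{Repr} by prepending a new leftmost column whose entry in the row indexed by $i_p$ is $h_{i_p-i_1-s}$ (for $p=0,1,\ldots,k$). Since $i_0=0$ and $s+i_1\ge 1$, the top row of $M^{(s)}$ reads $(0,\ldots,0,1)$, so expansion along that top row yields $\det M^{(s)}=\pm\det M'^{(s)}$, where $M'^{(s)}$ is the $k\times k$ minor obtained by deleting the top row and last column. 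Expanding the same determinant along its first column instead, and discarding the $p=0$ term (which vanishes since $h_{-i_1-s}=0$), gives $\det M^{(s)}=\sum_{l=1}^k\pm h_{i_l-i_1-s}S_l$ with signs matching those in the conjecture. Matching expansions, Conjecture~\ref{Brel} reduces to the single determinantal identity
\[
\det M'^{(s)}\;=\;\pm\,s_{(i_1-1)^{k-1},\,k-1-s}\,G_0.
\]

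When $s+i_1\le k-1$, the leftmost column of $M'^{(s)}$ coincides with the column of $H$ indexed by $q=k-(s+i_1)\in\{1,\ldots,k-1\}$, so $\det M'^{(s)}=0$. Simultaneously the string $(i_1-1,\ldots,i_1-1,k-1-s)$ fails to be weakly decreasing (since $k-1-s>i_1-1$) and its Jacobi--Trudi determinant has two equal rows, so $s_{(i_1-1)^{k-1},k-1-s}=0$. Both sides vanish, and in fact the resulting relation is exactly Theorem~\ref{Arel} with its parameter set equal to $s+i_1\le k-1$.

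The substantive case is $s+i_1\ge k$. Laplace expansion of $\det M'^{(s)}$ along its first column gives $\sum_{p=1}^k(-1)^{p+1}h_{i_p-i_1-s}\det N_p$, where each minor $\det N_p$, after row reversal, is identified by Jacobi--Trudi with $\pm s_{\mu^{(p)}}$ for an explicit partition $\mu^{(p)}$ read off from the sequence $(i_1,\ldots,\widehat{i_p},\ldots,i_k)-(1,2,\ldots,k-1)$. The main obstacle is verifying that this entire alternating sum factors as $\pm\,s_{(i_1-1)^{k-1},k-1-s}\,G_0$: this is a genuine Schur-function identity which does not appear to follow from Pieri's rule applied term by term. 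The two most promising routes are a Lindstr\"om--Gessel--Viennot interpretation of $\det M'^{(s)}$ as a signed count of nonintersecting lattice-path families whose source/sink geometry separates into two independent Jacobi--Trudi subsystems -- one delivering the $s_{(i_1-1)^{k-1},k-1-s}$ factor, the other delivering the $G_0$ factor -- or an application of a Pl\"ucker/Sylvester identity on the ambient Grassmannian forcing the factorization. The $k=3$ computations in the paper confirm this step there; extending it to arbitrary $k$ is the crux of the conjecture.
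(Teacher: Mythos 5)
Your reduction is sound as far as it goes, and it is the natural analogue of the paper's proof of Theorem~\ref{Arel}: prepending the column with entries $h_{i_p-i_1-s}$ to $H_{k;I}$ and expanding the resulting $(k+1)\times(k+1)$ determinant two ways correctly converts the conjectured relation (read with ``$=0$'', which the statement omits) into the single identity $\det M'^{(s)}=\pm\, s_{(i_1-1)^{k-1},k-1-s}\,G_0$, and your treatment of the degenerate range $s+i_1\le k-1$, where both sides vanish and one recovers an instance of Theorem~\ref{Arel}, is correct. But the proposal stops exactly where the conjecture begins. In the substantive range $s+i_1\ge k$ you assert the factorization and offer two candidate strategies (Lindstr\"om--Gessel--Viennot, Pl\"ucker relations) without carrying either out; by your own admission this step is ``the crux of the conjecture.'' What you have is therefore a reformulation, not a proof. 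For the record, the paper does not prove this statement either: it is posed as a conjecture, and the authors state explicitly that they have established the relations only for $k=3$.

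The missing step is not a routine verification, and there is a concrete reason the sketched routes do not obviously close it. By the skew Jacobi--Trudi identity, $M'^{(s)}$ (rows indexed by $i_1<\cdots<i_k$, column shifts $i_1+s,k-1,\ldots,1$) has determinant $\pm\, s_{\mu/\nu}(x_1,\ldots,x_k)$, where $\mu=(i_k-k+1,\ldots,i_2-1,i_1)$ is precisely the shape of $S_0$ and $\nu=(i_1+s-k+1)$ is a single row. So the required identity is a factorization of a specific skew Schur polynomial in exactly $k$ variables into two straight Schur polynomials. The standard mechanism producing such factorizations --- the skew shape disconnecting into components occupying disjoint rows and columns, which is exactly what an LGV ``separation into two independent path systems'' would deliver --- cannot apply here: generically the two claimed factors have $k$ and $k-1$ nonzero parts respectively, totalling $2k-1>k$ rows, whereas $\mu/\nu$ has at most $k$ rows. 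Whatever makes the factorization true (if it is true) is a finite-variable phenomenon special to $k$ variables, and neither of your proposed tools is set up to detect that. Until this identity is established for general $k$, the statement remains open.
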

That the ring is CM follows from the fact that the ideal is generated by the maximal minors of a $t\times m$-matrix in the ring of Laurent polynomials. To prove the theorem it suffices to prove the relations between the Schur polynomials. Unfortunately we have managed to do that only for $k=3$.

\section{Final remarks}
\medskip
Here we briefly explain the source of our interest in Vandermonde varieties. 
In 1977 J.~H.~Loxton and A.~J.~van der Poorten formulated an important  conjecture (Conjecture $1^\prime$ of \cite{LoPo}) claiming  that there exists a constant $\mu_k$ such that any integer recurrence of order $k$  either has at most $\mu_k$ integer zeros or has infinitely many zeros. 

This conjecture was first settled by W.~M.~Schmidt in 1999, see \cite {Sch} and also by J.~H.~Evertse and H.~P.~Schlickewei, see \cite{EvSl}. 

The upper bound for $\mu_k$ obtained in \cite{Sch} was 
$$\mu_k<e^{e^{e^{3k \log k}}},$$
which was later improved by the same author to 
$$\mu_k <e^{e^{e^{20 k}}}.$$

Apparently the best known at the moment upper bound for $\mu_k$ was obtained in \cite{All} and is given by 
$$\mu_k<e^{e^{k^{\sqrt {11 k}}}}.$$

Although the known upper bounds are at least double exponential it seems plausible that the realistic upper bounds should be polynomial. The only known nontrivial lower bound for $\mu_k$ was found in \cite{BaBe}  and is given by 
$$\mu_k\ge \binom{k+1} {2} -1.$$
One should also mention the non-trivial exact result of F.~Beukers showing that for sequences of {rational} numbers obtained from recurrence relations of length $3$ one has $\mu_3=6$, see \cite {Beu}. 

The initial idea of this project was to try to obtain upper/lower bounds for $\mu_k$ by studying  algebraic and geometric properties of Vandermonde varieties but they seems to be quite complicated.    
Let us  finish with some further problems and comments on them, that we got with an extensive computer search.
Many questions related to the Skolem-Mahler-Lech theorem translate immediately into questions about $V_{k;I}$. For example, one can name the following formidable challenges. 

\medskip
\noindent
{\em Problem 3.} For which pairs $(k;I)$ the variety $V_{k;I}$ is empty/non-empty? More generally, what is the dimension of $V_{k;I}$?

\medskip
We made a complete computer search for $\RR_I^\A$ and some variants where we removed solutions on the coordinate planes and axes, and looked for
arithmetic sequences, for $(0,i_1,i_2,i_3)$, $0<i_1<i_2<i_3$, $i_3\le13$ (so $k=3$, $m=4$). The only cases when 
$V_{k;I}$ was empty were $I=(0,1,3,7)$ and $I=(0,1,3,9)$ and their "duals" $(0,4,6,7)$ and $(0,6,8,9)$.  We suspect that our exceptions are the only possible. 
For $k=3$, $m=5$ we investigated $I=(0,i_1,i_2,i_3,i_4)$, $0<i_1<i_2<i_3<i_4$, $i_4\le9$. For $i_1=1$ about half of the cases had the expected dimension. For $(k,m)=(3,6)$, $i_5\le10$, for $(k,m)=(4,6)$, $i_5\le9$ and for $(k,m)=(5,8)$, $i_7\le10$, most cases were of expected dimension.
The corresponding calculations for  $\RR_{I}^{BC}$, $(k,m)=(3,5)$, $i_4\le9$, showed that about half of the cases had expected codimension.

\medskip
\noindent
{\em Problem 4.} For which pairs $(k;I)$ any solution of a linear recurrence vanishing at $I$ must have an additional integer root  outside $I$? More specifically, for which pairs $(k;I)$ any solution of a linear recurrence vanishing at $I$ must vanish infinitely many times in $\bZ$? In other words,  for which pairs $(k;I)$ the set of all  integer zeros of the corresponding  solution of any recurrence relation from $V_{k;I}$  must necessarily contain an arithmetic progression?

\medskip
For example, in case $k=3,\;m=4$ we found that the first situation occurs for $4$-tuples $(0,1,4,6)$ and $(0,1,4,13)$ which both force a non-trivial solution of a third order recurrence vanishing at them to vanish at the $6$-tuple $(0,1,4,6,13,52)$, which is the basic example in \cite {Beu}. The second  situation occurs if in a $4$-tuple $I=\{0,i_1,i_2,i_3\}$ two differences between its entries coincide, see \cite {Beu}. But this condition is only sufficient  and no systematic information is available.  Notice that for any pair $(k;I)$ the variety $\overline V_{k;I}$ is weighted-homogeneous where the coordinate $\al_i,\;i=1,\ldots,k$ has   weight $i$. (This action corresponds to the scaling of the characteristic roots of \eqref{eq:Char}.) 

\smallskip
 We looked for cases containing  an arithmetic sequence with difference at most 10 and we found cases which gave arithmetic sequences with difference
2,3,4, and 5, and a few cases which didn't give any arithmetic sequences.

\medskip
\noindent
{\em Problem 5.} Is it true that if an $(k+1)$-tuple $I$ consists of  two pieces of arithmetic progression with the same difference then any exponential polynomial vanishing at $I$ contains an arithmetic progression of integer zeros? 

\medskip
\noindent
{\em Problem 6.} 
If the answer to the previous question is positive is it true that there are only finitely many exceptions from this rule leading to only arithmetic progressions? 

\medskip
\noindent 
{Finally a problem similar to that of J.~H.~Loxton and A.~J.~van der Poorten can be formulated for real zeros of exponential polynomials instead of integer. Namely, the following simple lemma is true.  

\begin{lm}\label{lm:real} Let $\la_1,...,\la_n$ be a arbitrary finite set of (complex) exponents having all distinct real parts then an arbitrary exponential polynomial of the form $c_1e^{\la_1z}+c_2e^{\la_2z}+..+c_ne^{\la_nz},\; c_i\in \bC$ has at most finitely many real zeros. 
\end{lm}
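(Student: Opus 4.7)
The approach combines an asymptotic dominance argument on the real line with the fact that a nonzero entire function has isolated zeros.

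First I would reduce to the case where all $c_j\neq 0$ by discarding any vanishing terms (if every $c_j$ vanishes the statement is vacuous since $f\equiv 0$), and relabel so that the real parts are ordered $\Re(\la_1)<\Re(\la_2)<\cdots<\Re(\la_n)$. Next, on the real line I would exploit that $|e^{\la_j x}|=e^{\Re(\la_j)x}$, so as $x\to+\infty$ the term $c_n e^{\la_n x}$ strictly dominates the others in modulus. Factoring it out gives
\[f(x)=c_n e^{\la_n x}\Bigl(1+\sum_{j<n}\tfrac{c_j}{c_n}e^{(\la_j-\la_n)x}\Bigr),\]
and since each interior summand has modulus $|c_j/c_n|\,e^{(\Re\la_j-\Re\la_n)x}\to 0$, the parenthesized factor exceeds $1/2$ in modulus for all $x$ beyond some threshold $M_+$. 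This shows $f(x)\neq 0$ for $x\geq M_+$. The symmetric computation, this time factoring out $c_1 e^{\la_1 x}$ as $x\to-\infty$, produces a threshold $-M_-$ below which $f$ has no real zeros either.

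Finally, $f$ extends to an entire function on $\bC$, and it is not identically zero by the previous step, so its zero set is discrete in $\bC$; in particular, its intersection with the compact interval $[-M_-,M_+]$ is finite. Combining, $f$ has only finitely many real zeros. The argument carries no serious obstacle; the only subtlety to keep in mind is that one must first discard the terms with $c_j=0$ before invoking dominance, since the hypothesis of the lemma constrains only the exponents $\la_j$ and not the coefficients $c_j$.
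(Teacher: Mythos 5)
Your argument is correct and complete: the dominance of the term with largest (resp.\ smallest) real part as $x\to+\infty$ (resp.\ $x\to-\infty$) confines all real zeros to a compact interval, and the discreteness of the zero set of the resulting nonzero entire function finishes the job. The paper states this lemma without proof (calling it ``simple''), so there is nothing to compare against, but yours is evidently the intended argument; your remark about first discarding terms with $c_j=0$ (and noting that the identically zero case must be implicitly excluded) is a sensible clarification of the statement as written.
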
 

\noindent
{\em Problem 7.} Does there  exist an upper bound on the maximal number real for the set of exponential polynomials given in the latter lemma in terms of $n$ only?

\medskip
\noindent 
{\em Problem 8.} What about non-regular cases? Describe their relation to the existence of additional integer zeros and arithmetic progressions as well as additional Schur polynomials in the ideals.

\end{document}